\documentclass[11pt]{amsart}
\usepackage[letterpaper,vmargin=1in,hmargin=1.5in]{geometry}
\usepackage{amssymb}
\usepackage{enumitem}
\setlist{leftmargin=0.5cm}
\usepackage{tikz-cd}
\usepackage[pdfusetitle,colorlinks=true, linkcolor=blue, urlcolor=blue, citecolor=blue]{hyperref}

\numberwithin{equation}{section}

\theoremstyle{plain}
\newtheorem{thm}{Theorem}[section]
\newtheorem{lem}[thm]{Lemma}
\newtheorem{prop}[thm]{Proposition}
\newtheorem{conjecture}[thm]{Conjecture}
\theoremstyle{remark}
\newtheorem*{rem*}{Remark}
\newtheorem*{claim*}{Claim}

\newcommand{\CC}{\mathbb{C}}
\newcommand{\kk}{\mathbf{k}}
\newcommand{\iso}{\xrightarrow{\sim}}

\begin{document}

\title{Gaudin from Cherednik}
\author{Matthew McMillan}
\begin{abstract}
Let $W$ be a complex reflection group and $X$ the De~Concini--Procesi
wonderful model of its reflection arrangement. Inside Etingof's sheaf
of Cherednik algebras on $X$ we define a subsheaf generated by the
Dunkl operators that extend the standard ones on the hyperplane complement.
We conjecture that at $t=0$ the image of the logarithmic vector fields
in a natural module over this subsheaf is the sheaf of Gaudin algebras
on $X$ defined by Aguirre, extending a construction of Bonnaf\'e
and Rouquier from the hyperplane complement to $X$. We prove the conjecture
for $W=S_{3}$.
\end{abstract}
\maketitle

\section{Introduction}

Let $W\subset GL(V)$ be a complex reflection group with reflections
$S$, and let $V_{reg}\subset V$ be the complement of the reflection
hyperplanes $\ker\alpha_{s}$. For a $\CC W$-module $U$ and a parameter
$c$, the Knizhnik--Zamolodchikov (KZ) connection on $V_{reg}\times U$
is
\[
\nabla_{KZ}=d-c\sum_{s\in S}\varepsilon(s)\frac{d\alpha_{s}}{\alpha_{s}}\,s,\quad\text{with coefficients}\quad H_{\xi}(z)=c\sum_{s\in S}\varepsilon(s)\frac{\langle\xi,\alpha_{s}\rangle}{\alpha_{s}(z)}\,s
\]
for $\xi\in V$, where $\varepsilon=\det$. This connection is flat, and its monodromy
factors through the Hecke algebra of $W$. It was proposed in \cite{knizhnikCurrentAlgebraWessZumino1984}
for $W=S_{n}$, and extended to Coxeter groups in \cite{cherednikGeneralizedBraidGroups1989,cherednikComputationMonodromyCertain1990,cherednikMonodromyRepresentationsGeneralized1991}
and to complex reflection groups in \cite{broueComplexReflectionGroups1998}.
The $H_{\xi}$ generalize the Gaudin Hamiltonians: at each $z\in V_{reg}$
they span a space $G(z)\subset\CC S\subset\CC W$ of commuting elements,
the \emph{Gaudin algebra}.

The Dunkl operators $D_{\xi}=t\partial_{\xi}-H_{\xi}$, with $\mathcal{O}(V)$
and $W$, generate the rational Cherednik algebra $H_{t,c}(W,V)$ \cite{etingofSymplecticReflectionAlgebras2002}
whose modules at $t=1$ give bundles with KZ connection on $V_{reg}$
\cite{ginzburgCategoryMathcalORational2003,cherednikUnificationKnizhnikZamolodchikovDunkl1991}.
At $t=0$, Bonnaf\'e and Rouquier \cite{bonnafeCherednikAlgebrasCalogeroMoser2022}
recover the family $G(z)$ from $H_{0,c}$ and use its monodromy
to define Calogero--Moser cells, which conjecturally generalize Kazhdan--Lusztig
cells \cite{bonnafeCalogeroMoserKazhdan2013,bonnafeCherednikAlgebrasCalogeroMoser2022};
for $W=S_{n}$ this is proved in \cite{brochierGaudinAlgebrasRSK2023},
building on \cite{mukhinBetheAlgebraGaudin2014}.

Near the hyperplanes these structures are governed by the wonderful
model $p:X\to V$ of De~Concini and Procesi \cite{deconciniWonderfulModelsSubspace1995}:
$p$ is proper and birational, an isomorphism over $V_{reg}$, and
$D=X\setminus V_{reg}$ is a normal crossings divisor. In local coordinates
on $X$ the KZ system has a simple form and its monodromy is easily
computed \cite{deconciniHyperplaneArrangementsHolonomy1995}. This
geometry underlies Drinfeld's asymptotic zones \cite{drinfeldQuasitriangularQuasiHopfAlgebras1991}
and Varchenko's results relating KZ monodromy to the quantum $R$-matrix
and asymptotic KZ solutions to crystal bases \cite{varchenkoAsymptoticSolutionsKnizhnikZamolodchikov1995,varchenkoMultidimensionalHypergeometricFunctions1995}
(cf.~\cite{rybnikovCactusGroupMonodromy2018}). Aguirre \cite{aguirreNotionGaudinSubalgebras2014}
(cf.~\cite{aguirreGaudinSubalgebrasStable2011,aguirreGaudinSubalgebrasWonderful2016})
defined a morphism
$\mathcal{T}_{X}(-\log D)\to\mathcal{O}_{X}\otimes\CC S$ on logarithmic
vector fields with image a sheaf $\mathcal{G}au(W)$ of Gaudin algebras
on $X$ extending $G(z)$; a trigonometric analogue for root systems
is given in \cite{ilinBetheSubspacesWonderful2025}.

We propose a construction of $\mathcal{G}au(W)$ from Cherednik algebras,
generalizing that of \cite{bonnafeCherednikAlgebrasCalogeroMoser2022}
from the affine variety $V_{reg}$ to the non-affine model $X$. Etingof's
sheaf of Cherednik algebras $\mathcal{H}_{c}(X,W)$ on $X$ \cite{etingofCherednikHeckeAlgebras2017}
is generated by $\mathcal{O}_{X}$, $W$, and generalized Dunkl operators.
We define a subsheaf $\mathcal{H}_{c}^{\circ}(X,W)$ by keeping only
the sheaf $\mathcal{J}$ of those Dunkl operators that extend the
standard ones on $V_{reg}$, and set ${\mathcal{M}_{c}=\mathcal{H}_{c}^{\circ}/\mathcal{H}_{c}^{\circ}\cdot\mathcal{J}}$.
With $\kk=\CC[c]$, we conjecture that at $t=0$ the natural map ${\rho:\mathcal{O}_{X}\otimes\kk W}\to\mathcal{M}_{0,c}$
is an isomorphism (Conjecture~\ref{conj:rho}), and that the resulting
map $\mathcal{T}_{X}(-\log D)\to\mathcal{H}_{0,c}^{\circ}\to\mathcal{M}_{0,c}\cong\mathcal{O}_{X}\otimes\kk W$
is injective with image $\mathcal{G}au(W)$ (Conjecture~\ref{conj:main}).
\begin{thm}
\label{thm:intro}For $W=S_{3}$ acting on its reflection representation
$V=\CC^{2}$, with $X$ the blow-up of $V$ at the origin, Conjectures
\ref{conj:rho} and \ref{conj:main} hold.
\end{thm}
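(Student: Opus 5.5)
The proof is a direct local computation on the two standard charts of the blow-up $p:X\to V=\CC^{2}$. Away from the exceptional divisor $E=p^{-1}(0)$ both conjectures reduce to the affine case over $V\setminus\{0\}$: there $\mathcal{H}_{0,c}^{\circ}$ agrees with Etingof's sheaf, $\mathcal{J}$ is generated by the standard Dunkl operators, and Bonnaf\'e--Rouquier's PBW description of $H_{0,c}$ gives $\mathcal{M}_{0,c}\cong\mathcal{O}\otimes\kk W$. The image of $\mathcal{T}_{X}(-\log D)$ is then spanned by the $H_{\xi}$, which is $\mathcal{G}au(W)$ off $E$. Everything therefore comes down to a neighbourhood of $E$. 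Since $W$ acts on $E\cong\mathbb{P}^{1}$ with the three reflection lines as special points, it suffices to work in one chart $(u,v)$ with $x=u$, $y=uv$. Here $E=\{u=0\}$, and the three hyperplanes become $u\cdot\ell_{i}(v)=0$. I would place one hyperplane at $v=\infty$ or choose coordinates so that the strict transforms are $v=0,\pm1$ (or a similar normal form). Only points of $E$ need checking, and the stabilizers of such points in $S_{3}$ are trivial or of order $2$.

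For $E$ I will use the explicit generators. A local frame of $\mathcal{T}_{X}(-\log D)$ near a generic point of $E$ is $u\partial_{u}$, $\partial_{v}$. Near a point where a strict transform $v=a$ meets $E$ the frame becomes $u\partial_{u}$, $(v-a)\partial_{v}$. I then compute the pullbacks of the standard Dunkl operators $D_{\xi}=-H_{\xi}$ at $t=0$. In these coordinates $H_{\xi}$ has poles of order one along $E$: $\partial_{x},\partial_{y}$ pick up a factor $1/u$, and so does each $1/\alpha_{s}$. Hence $u H_{\xi}$ extends, and I will show that $\mathcal{J}$ is generated as an $\mathcal{O}_{X}$-module by the two Dunkl operators attached to the logarithmic frame. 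These are $D_{u\partial_{u}}=-\sum_{s}c\,\varepsilon(s)\,s$, which is central (up to sign, the Euler/grading element at $t=0$), and $D_{\partial_{v}}=-c\sum_{s}\varepsilon(s)\frac{\partial_{v}\alpha_{s}}{\alpha_{s}}s$, with $\alpha_{s}/u=\ell_{s}(v)$. Both are regular on the chart, apart from the correct logarithmic behaviour in Etingof's sense at the strict transforms. This also produces the map $\mathcal{T}_{X}(-\log D)\to\mathcal{H}^{\circ}_{0,c}$ on the frame.

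Next comes Conjecture~\ref{conj:rho}. I would show that $\mathcal{H}^{\circ}_{0,c}$ has a PBW-type spanning set $\mathcal{O}_{X}\otimes\kk W\otimes\mathrm{Sym}(\mathcal{J})$ using the relations $[D_{\eta},f]=0$ at $t=0$ (up to reflection terms in $\kk W$) and $[D_{\eta},w]$. That gives surjectivity of $\rho$. For injectivity, I would build an explicit $\mathcal{H}^{\circ}_{0,c}$-module structure on $\mathcal{O}_{X}\otimes\kk W$ in which $f$ and $w$ act naturally, $D_{\eta}$ acts by right multiplication-type formulas mirroring the Bonnaf\'e--Rouquier module $H_{0,c}\otimes_{\CC[V^{*}]\rtimes W}\text{triv}$, and $\mathcal{J}$ kills $1$. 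Regularity of the frame operators along $E$ makes this well defined on the chart, and the module is generated freely. I expect this to be the main obstacle. Some element of $\mathcal{H}^{\circ}_{0,c}\cdot\mathcal{J}$ could produce, after multiplication by functions vanishing on $E$, an element of $\mathcal{O}_{X}\otimes\kk W$ divisible by $u$ that is not visibly in the ideal. Ruling this out requires checking that the action is well defined on the chart and not only on the generic fibre. At the stabilizer-order-$2$ points I would verify it by a hands-on calculation with the $2\times 2$ blocks of $\kk S_{3}$.

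Finally, Conjecture~\ref{conj:main}. The image of the frame in $\mathcal{O}_{X}\otimes\kk W$ consists of $1\mapsto$ the central element $\sum_{s}\varepsilon(s)s$ (times $c$) and $\partial_{v}\mapsto c\sum_{s}\varepsilon(s)\partial_{v}\log\ell_{s}(v)\,s$ (at the special points, multiplied by $(v-a)$). I compare these with Aguirre's explicit formula for $\mathcal{G}au(S_{3})$ on this chart, which comes from the residues of the KZ form along $E$ and the strict transforms. Injectivity holds because the two images are $\mathcal{O}_{X}$-independent in $\mathcal{O}_{X}\otimes\CC S$ at every point, including on $E$. At the special points this is the statement that the sum of all transpositions and the single residue transposition $s_{a}$ are linearly independent.
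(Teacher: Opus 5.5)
Your route is the paper's. You compute locally on the blow-up and take $\mathcal{J}$ to be generated by the Dunkl operators of $u\partial_u$ and $\partial_v$; these are the paper's $D_{\nu_1}=\hbar\nu_1\partial_{\nu_1}+c(s_{12}+s_{13}+s_{23})$ and $D_{\nu_2}$, and they come from a frame of $\mathcal{T}_X(-\log E)$, not of $\mathcal{T}_X(-\log D)$. You get surjectivity of $\rho$ from a PBW-type spanning set. You get injectivity by mapping to a module in which $\mathcal{O}_X\otimes\kk W$ visibly embeds. Finally you read off the images $c\sum_s s$ and $\delta_{\nu_2}\cdot(\text{KZ coefficient})$ and match them with generators of $\mathcal{G}au(S_3)$.

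The step that fails as stated is the reduction to the single chart $C$ given by $x=u$, $y=uv$.
\begin{itemize}
\item All the sheaves here ($\mathcal{O}_X\rtimes W$, $\mathcal{H}^{\circ}_{c}$, $\mathcal{J}$, $\mathcal{M}_{0,c}$) live on the $W$-equivariant Zariski topology, because a product $f\,s\,g=f\,{}^{s}g\,s$ only makes sense on a $W$-stable open.
\item $C$ is not $W$-stable: its complement meets $E$ in a single point, and $S_3$ has no fixed point on $E\cong\mathbb{P}(V)$ since $V$ is irreducible.
\item No affine open contains the projective curve $E$. So every $W$-stable affine chart misses at least one $W$-orbit on $E$, and at least two charts are needed.
\item With your first normalization (a strict transform at $v=\infty$), the $W$-stable open $\bigcap_{w\in W}w(C)$ misses exactly the three points $E\cap H'_s$, which is where all the interesting checks happen.
\end{itemize}
This is why the paper uses the two $S_3$-stable affine charts $U_{-1}$ and $U_{\zeta}$. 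On $E$ they omit, respectively, the orbit $\{\nu_2=\infty,1/3,2/3\}$ of transposition-fixed points off the hyperplanes and the orbit $\{\mu_2=\infty,(1-\zeta^2)^{-1}\}$ of $3$-cycle-fixed points. This also shows your claim about stabilizers of points of $E$ is false: those last two points have stabilizer $A_3$. The repair is routine, but you must run your computation on two such charts.

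Two smaller points.
\begin{itemize}
\item At $t=0$ the Dunkl operators are $\mathbf{L}_\xi-H_\xi$, not $-H_\xi$. If $\mathcal{J}$ really consisted of elements of $\mathcal{O}\otimes\kk W$, the left ideal $\mathcal{H}^{\circ}_{0,c}\cdot\mathcal{J}$ would contain $c\sum_s s$, and $\rho$ could not be injective. $H_\xi$ only appears after passing to $\mathcal{M}_{0,c}$.
\item The obstacle you foresee for injectivity of $\rho$ does not exist, and you need not prove that $\mathcal{O}_X\otimes\kk W$ is stable. $\mathcal{H}^{\circ}_{0,c}(U)$ maps to the localized $H_{0,c}$ over $U\cap V_{reg}$, and $\mathcal{J}(U)$ maps into the left ideal generated by the standard Dunkl operators. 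So $\mathcal{M}_{0,c}(U)$ maps to the Bonnaf\'e--Rouquier module $\Delta^{reg}(\kk W)$ over $U\cap V_{reg}$. The composite with $\rho$ is the restriction $\mathcal{O}(U)\otimes\kk W\to\mathcal{O}(U\cap V_{reg})\otimes\kk W$, which is injective since $X$ is integral.
\end{itemize}
The paper does the same thing via $\kk\otimes\bigl(j_*\pi_*\mathcal{O}_{T^*X_{reg}}\rtimes S_3\bigr)/(\mathcal{J}_0)\cong\kk\otimes\bigl(j_*\mathcal{O}_{X_{reg}}\rtimes S_3\bigr)$. Stability does hold anyway, uniformly, because $[D,f]$ is regular for $D\in\mathcal{J}$ and $f\in\mathcal{O}_X$. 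So no $2\times2$ block computation is needed.
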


We expect the method to extend at least to $W=S_{n}$, and hope that
the $t\to0$ asymptotics of $\mathcal{M}_{t,c}$ (cf.~\cite{mukhinKZCharacteristicVariety2012})
will link the KZ system on $X$ to Calogero--Moser cells.

\section{Background}

\subsection{Cherednik algebra associated to $(W,V)$}

\subsubsection{Complex reflection group}

We work over $\CC$. Let $W$ be a complex reflection group acting
on the complex vector space $V$. We may consider $W\subset GL(V)$.
Let $S=\{s_{i}\}_{i}\subset W$ be the set of reflections. (By hypothesis
$W=\langle S\rangle$.) Assume $|S|=n$. We have a pairing $\langle-,-\rangle:V\times V^{*}\to\CC$.
For each $s\in S$ choose $\alpha_{s}\in V^{*}$ and $\alpha_{s}^{\vee}\in V$
such that $\alpha_{s}$ annihilates the hyperplane fixed by $s$,
i.e.~
\[
\ker\alpha_{s}=\ker(s-\mathrm{Id}_{V}),\qquad\CC\alpha_{s}^{\vee}=\mathrm{im}(s-\mathrm{Id}_{V}).
\]
We let $H_{s}=\ker\alpha_{s}$. This is the reflection hyperplane
associated to $s$.

Let $\varepsilon:W\to\CC^{\times}$ be the map of groups given by the
determinant. For $W$ a Coxeter group we have $\varepsilon(s)=-1$.

Let $V_{reg}=V\backslash\cup_{s\in S}\{H_{s}\}$. We define $X$ below.
We work with the \emph{$W$-equivariant Zariski topology} on $V$
and $X$. The open subsets are the Zariski open subsets that are also
$W$-equivariant. Then $\mathcal{O}_{V}\rtimes W$ and $\mathcal{D}_{V}\rtimes W$
are sheaves on $V$, and similarly $\mathcal{O}_{X}\rtimes W$ and
$\mathcal{D}_{X}\rtimes W$ are sheaves on $X$. (It may be possible
to rewrite many of the constructions in terms of sheaves on $X/W$,
but this approach, as used in \cite{bellamySmoothnessCentresRational2013}, is convenient for us.)

\subsubsection{Differential operators and sheaves}

Write $\mathcal{D}(V_{reg})$ for the algebra of differential operators
on $V_{reg}$ (and analogously for $V$). Then $\mathcal{D}(V_{reg})$
is a filtered algebra with filtration given by the order of a differential
operator. As a filtered algebra it has a graded Rees algebra $\mathrm{Rees}(\mathcal{D}(V_{reg}))$,
and we write $\hbar$ for the copy of $1$ in degree $1$.

The algebra $\mathcal{D}(V_{reg})$ on $V_{reg}$ considered as an
affine scheme determines by localization a sheaf of algebras $\mathcal{D}_{V_{reg}}$
on $V_{reg}$ such that $\mathcal{D}_{V_{reg}}(V_{reg})=\mathcal{D}(V_{reg})$.

An operator in $\mathcal{D}(V)$ can be restricted to an operator
in $\mathcal{D}(V_{reg})$. Since $V_{reg}$ is Zariski open in the
irreducible variety $V$, this restriction map is injective. There
is an open embedding $j':V_{reg}\to V$ and the restriction map gives
a injective morphism of sheaves $\mathcal{D}_{V}\to j'_{*}\mathcal{D}_{V_{reg}}$.

Later we will define $X$ and $X_{reg}$ and use the notation $j:X_{reg}\to X$
for that open embedding.

\subsubsection{Parameters}

We let $\kk=\CC[c]$ for $c$ an indeterminate.

\subsubsection{Standard Dunkl operators}

Let $R=\mathrm{Rees}\,\bigl(\kk\otimes(\mathcal{D}(V_{reg})\rtimes W)\bigr)$.
We have a map of complex vector spaces $V\to R$ given by
\[
V\ni\xi\mapsto D_{\xi}=\hbar\partial_{\xi}-c\sum_{s\in S}\varepsilon(s)\frac{\langle\xi,\alpha_{s}\rangle}{\alpha_{s}}s.
\]
The image of this map generates a left $\mathcal{O}(V_{reg})$-submodule
of $R$. By localizing this $\mathcal{O}(V_{reg})$-module at $W$-stable
affine open sets, we obtain a sheaf of left $\mathcal{O}_{V_{reg}}$-modules.
This will be called $\mathcal{J}_{\mid V_{reg}}$, the sheaf of `standard'
Dunkl operators on $V_{reg}$.

\subsubsection{Cherednik algebra}

The rational Cherednik algebra $H_{c}(W,V)$ associated to $(W,V)$
is the $\kk$-subalgebra of $R=\mathrm{Rees}\bigl(\kk\otimes(\mathcal{D}(V_{reg})\rtimes W)\bigr)$
generated by $x\in V^{*}$, $w\in W$, and $D_{\xi}\in\mathrm{Rees}\:\mathcal{D}(V_{reg})$
for $\xi\in V$.

Given a point $t\in\CC$, the (specialized) rational Cherednik algebra
$H_{t,c}(W,V)$ is the quotient $H_{c}(W,V)\big/(\hbar-t)$.

Recall that the quotient of a Rees algebra $A=\mathrm{Rees}(\sum F^{i}\bar{A})$
of a filtered algebra $\sum F^{i}\bar{A}$ by the ideal $\hbar A$
gives an algebra canonically isomorphic to the associated graded algebra
$\mathrm{gr}\,\bar{A}$. We know
\[
\mathrm{gr}\,\bigl(\mathcal{D}(V_{reg})\rtimes W\bigr)\iso\mathcal{O}(T^{*}V_{reg})\rtimes W,
\]
and it is easily checked that by specializing at $t=0$ we obtain
an embedding
\[
H_{0,c}(W,V)\hookrightarrow\kk\otimes\bigl(\mathcal{O}(T^{*}V_{reg})\rtimes W\bigr).
\]

\begin{rem*}
That this map is injective relies on the following general fact. Let
$A$ be a filtered algebra and $B\subset A$ be a filtered subalgebra.
Then we have a canonical embedding $\mathrm{Rees}(B)\hookrightarrow\mathrm{Rees}(A)$
and a canonical embedding $\mathrm{Rees}(B)\big/(\hbar)\hookrightarrow\mathrm{Rees}(A)\big/(\hbar)$.
The latter is, i.e., an embedding $\mathrm{gr}\,B\hookrightarrow\mathrm{gr}\,A$.
\end{rem*}
The image is generated by $x\in V^{*}$, $w\in W$, and
\[
D_{\xi}^{0}=\mathbf{L}_{\xi}-c\sum_{s\in S}\varepsilon(s)\frac{\langle\xi,\alpha_{s}\rangle}{\alpha_{s}}s
\]
for $\xi\in V$, where $\mathbf{L}_{\xi}=[\hbar\partial_{\xi}]$ is
the image of $\hbar\partial_{\xi}$ in the quotient $R/\hbar R$ that
is canonically identified with $\kk\otimes\bigl(\mathcal{O}(T^{*}V_{reg})\rtimes W\bigr)$.
\begin{rem*}
One can define $H_{t,c}$ by generators and relations, but a presentation
by generators and relations is not known for the sheaves of Cherednik
algebras that we consider.
\end{rem*}
There is a canonical isomorphism $\kk[V_{reg}\times V^{*}]\iso\kk\otimes\mathcal{O}(T^{*}V_{reg})$
given by $x\mapsto x$, $\xi\mapsto D_{\xi}^{0}$ for $x\in V^{*}$
and $\xi\in V$.

\subsection{Etingof-Cherednik sheaf of algebras on $X$}

\subsubsection{Model $X$}


There is an open embedding $j:X_{reg}\to X$. The restriction map
gives an isomorphism of sheaves $\mathcal{D}_{X}\iso j_{*}\mathcal{D}_{X_{reg}}$.

Recall that we have (extended) reflection hyperplanes $H_{s}'\subset X$
and $X_{reg}=X\backslash\cup_{s\in S}\{H_{s}'\}$. Each $H_{s}'$
is a proper transform of $H_{s}$.

We also have $V_{reg}\subsetneq X_{reg}$ and $X=\overline{X_{reg}}=\overline{V_{reg}}$.

\subsubsection{Etingof-Dunkl operators}

We define a sheaf $\mathcal{F}$ on $X$ of Etingof-Dunkl operators.
It is an $\mathcal{O}_{X}$-submodule generated inside the sheaf of
algebras $\mathcal{R}=\mathrm{Rees}\,\bigl(\kk\otimes(j_{*}\mathcal{D}_{X_{reg}}\rtimes W)\bigr)$
on $X$. Let $U\subset X$ be an affine open $W$-stable set. Let
$\xi\in\mathcal{T}_{X}(U)$ be a vector field on $U$. Then $\mathcal{F}(U)$
is generated as a left $\mathcal{O}_{X}(U)$-module by the elements
\[
D_{\xi}=\hbar\partial_{\xi}+c\sum_{(H_{s}',s)}f_{H_{s}'}(x)\cdot s.
\]
Here $f_{H_{s}'}(x)\in\Gamma(U,\mathcal{O}_{X}\langle H_{s}'\rangle)$
is an element of the coset $\chi_{H_{s}'}(\xi)\in\Gamma(U,\mathcal{O}_{X}\langle H_{s}'\rangle/\mathcal{O}_{X})$.
(We define $\chi_{H_{s}'}:\mathcal{T}_{X}\to\mathcal{O}_{X}\langle H_{s}'\rangle/\mathcal{O}_{X}$
elsewhere.)

Define $\mathcal{F}_{0}$ to be the image of $\mathcal{F}$ in the
quotient $\mathcal{R}/\hbar\mathcal{R}\cong\kk\otimes\bigl(\mathrm{gr}\,j_{*}\mathcal{D}_{X_{reg}}\rtimes W\bigr)$.

\subsubsection{Etingof's sheaf of algebras $\mathcal{H}_{c}(X,W)$
on $X$}

The sheaf of algebras $\mathcal{H}_{c}(X,W)$ on $X$ is by definition
the subsheaf of algebras of $\mathrm{Rees}\bigl(\kk\otimes(\mathcal{D}_{X_{reg}}\rtimes W)\bigr)$
generated by $\mathcal{O}_{X}$, $W$, and $\mathcal{F}$.

\subsection{Small Cherednik algebra on $X$}

We define a sheaf of algebras $\mathcal{H}_{c}^{\circ}(X,W)$
contained in $\mathcal{H}_{c}(X,W)$.
\begin{rem*}
The definition relies on the fact that $X$ is a model of $V_{reg}$,
and $V$ is a vector space equipped with an inner product.
\end{rem*}

\subsubsection{Good Dunkl operators}

We let `good' Dunkl operators be the
Etingof-Dunkl operators that extend standard Dunkl operators, as follows.
Standard Dunkl operators form a sheaf $\mathcal{J}_{\mid V_{reg}}$
of $\mathcal{O}_{V_{reg}}$-modules on $V_{reg}$. Let $j:V_{reg}\to X$
be the open embedding. Recall that $V_{reg}\subset X$ is Zariski
open. For any $W$-stable open affine $U\subseteq X$, we let
\[
\mathcal{J}(U)=\mathcal{F}(U)\cap j_{*}\mathcal{J}_{\mid V_{reg}}(U).
\]
This defines a sheaf $\mathcal{J}$ of operators on $X$ that we call
`good' Dunkl operators.

\subsubsection{Sheaf of small Cherednik algebras}

The sheaf of algebras $\mathcal{H}_{c}^{\circ}(X,W)$ on $X$ is by
definition the subsheaf of algebras of $\mathcal{R}=\mathrm{Rees}\bigl(\kk\otimes(j_{*}\mathcal{D}_{X_{reg}}\rtimes W)\bigr)$
generated by $\mathcal{O}_{X}$, $W$, and $\mathcal{J}$.

Let $\pi:T^{*}X_{reg}\to X_{reg}$ be the projection of varieties.
\begin{lem}
\label{lem:H^0-at-0}By specializing at $\hbar=0$, the inclusion
$\mathcal{H}_{c}^{\circ}(X,W)\hookrightarrow\mathcal{R}$ descends
to an embedding of algebras:
\[
\mathcal{H}_{0,c}^{\circ}(X,W)\hookrightarrow\kk\otimes\bigl(j_{*}\pi_{*}\mathcal{O}_{T^{*}X_{reg}}\rtimes W\bigr).
\]
\end{lem}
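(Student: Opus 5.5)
The plan is to mirror the affine argument recalled in the Remark after the definition of $H_{c}(W,V)$, carried out sheaf-theoretically over $W$-stable affine opens $U\subseteq X$. The key point is that $\mathcal{H}_{c}^{\circ}(X,W)(U)$ is, by definition, a subalgebra of $\mathcal{R}(U)=\mathrm{Rees}\bigl(\kk\otimes(\mathcal{D}(U\cap X_{reg})\rtimes W)\bigr)$. Note that $(j_{*}\mathcal{D}_{X_{reg}})(U)=\mathcal{D}_{X_{reg}}(U\cap X_{reg})$. Give $\mathcal{H}_{c}^{\circ}(U)$ the induced grading: we should check it is a \emph{graded} subalgebra, i.e.\ generated by homogeneous elements. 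This holds because $\mathcal{O}_{X}$ and $W$ sit in degree $0$, and every generator $D_{\xi}=\hbar\partial_{\xi}+c\sum f_{H'_{s}}s$ in $\mathcal{J}\subset\mathcal{F}$ is homogeneous of degree $1$, the term $c f s$ being regarded as $\hbar\cdot(cfs)$ of filtration degree $\le 1$ in the Rees algebra. Then $\mathcal{H}_{c}^{\circ}(U)=\mathrm{Rees}(B_{U})$ for the filtered subalgebra $B_{U}$ obtained by setting $\hbar=1$, with the induced filtration. By definition, $\mathcal{H}^{\circ}_{0,c}(U)$ is $\mathcal{H}^{\circ}_{c}(U)/\hbar\mathcal{H}^{\circ}_{c}(U)$.

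Next, apply the general fact in the Remark: for a filtered subalgebra $B\subset A$, the map $\mathrm{Rees}(B)/\hbar\to\mathrm{Rees}(A)/\hbar$ is injective. The content is that $\hbar\,\mathrm{Rees}(A)\cap\mathrm{Rees}(B)=\hbar\,\mathrm{Rees}(B)$. This is true because the filtration on $B$ is the one induced from $A$: if $\hbar a\in F^{i}B$ with $a\in F^{i-1}A$, then the underlying element lies in $B\cap F^{i-1}A=F^{i-1}B$. This gives an injection of $\mathcal{H}^{\circ}_{0,c}(U)$ into $\mathcal{R}(U)/\hbar\mathcal{R}(U)$.

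Then identify $\mathcal{R}(U)/\hbar\mathcal{R}(U)$ with $\kk\otimes\bigl(\mathrm{gr}\,\mathcal{D}(U\cap X_{reg})\rtimes W\bigr)$. Here $\mathrm{gr}$ commutes with $\rtimes W$ and with $\kk\otimes$, since $W$ lies in filtration degree $0$ and $\kk$ is flat over $\CC$. Since $X_{reg}$ is smooth, $\mathrm{gr}\,\mathcal{D}(U\cap X_{reg})\cong\mathcal{O}(T^{*}(U\cap X_{reg}))=(j_{*}\pi_{*}\mathcal{O}_{T^{*}X_{reg}})(U)$. This is where smoothness enters. If $U\cap X_{reg}$ is not affine, we should instead compute on an affine cover and use that $\mathrm{gr}$ of sections of $\mathcal{D}$ agrees with sections of $\mathrm{gr}\,\mathcal{D}$. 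This is justified by left exactness of sections applied to $0\to F^{i-1}\to F^{i}\to\mathrm{Sym}^{i}\mathcal{T}\to0$, which yields at least an injection $\mathrm{gr}\,\Gamma(\mathcal{D})\hookrightarrow\Gamma(\mathrm{gr}\,\mathcal{D})$. An injection is all we need.

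Finally, compatibility with restriction maps shows that these maps glue to a morphism of sheaves, injective on each $W$-stable affine, hence injective. Multiplicativity is automatic, since everything is a quotient of algebra maps. The main obstacle I expect is the bookkeeping in the quotient step. We must make sure that $\mathcal{H}^{\circ}_{0,c}$ means the quotient of sections by $\hbar$ (or its sheafification), and that the $\hbar$-torsion-freeness and graded-subalgebra claims hold for the sheaf generated by $\mathcal{J}$. Sheafification of the quotient preserves injectivity, since sheafification is exact.
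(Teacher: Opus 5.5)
The paper gives no separate proof of this lemma. It relies on the general fact about Rees algebras of filtered subalgebras recorded in the Remark in Section~2, so you are on its intended route. But the step that makes that fact applicable is asserted, not proved, and it carries all the content.

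\textbf{The gap.} Generation by homogeneous elements only shows that $\mathcal{H}_c^\circ(U)=\bigoplus_i H_i\hbar^i$ is graded. Hence it is $\mathrm{Rees}(B_U)$ for the filtration $F^iB_U=H_i$ given by the degrees of the generators. Nothing you say shows $H_i=B_U\cap F^i$, the filtration induced from $\mathcal{R}(U)$. The degree-$i$ parts of $\mathcal{H}_c^\circ(U)\cap\hbar\mathcal{R}(U)$ and of $\hbar\,\mathcal{H}_c^\circ(U)$ are $H_i\cap F^{i-1}$ and $H_{i-1}$. So "$H_i=B_U\cap F^i$ for all $i$" is equivalent to $\mathcal{H}_c^\circ(U)\cap\hbar\mathcal{R}(U)=\hbar\,\mathcal{H}_c^\circ(U)$, which is exactly the injectivity to be proved. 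Your reduction is therefore circular.

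It also fails for general homogeneous generators. In $\mathrm{Rees}(\mathcal{D}(\mathbb{A}^1))$, the subalgebra $\mathcal{S}$ generated by $\hbar,\hbar x,\hbar\partial$ is graded with $\mathcal{S}_0=\CC$. Yet $\hbar x=\hbar\cdot x$ lies in $\hbar\,\mathrm{Rees}$ but not in $\hbar\mathcal{S}$. So its nonzero class in $\mathcal{S}/\hbar\mathcal{S}$ maps to $0$ in $\mathrm{gr}\,\mathcal{D}(\mathbb{A}^1)$. The real obstacle is this saturation, not gradedness or $\hbar$-torsion-freeness as you suggest at the end.

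\textbf{What is missing} is a PBW-type argument that uses the actual generators. Work on $U_{-1}$; $U_\zeta$ is the same.
\begin{itemize}
\item Commutators of $\mathcal{J}$ with $\mathcal{O}_X$ and with $\mathcal{J}$ have no poles, and $\mathcal{J}$ is $W$-stable. So every section of $\mathcal{H}_c^\circ(U_{-1})$ can be written as $\sum f_{a,w}\,w\,D_{\nu_1}^{a_1}D_{\nu_2}^{a_2}$ with $f_{a,w}\in\kk[\hbar]\otimes\mathcal{O}(U_{-1})$. This is the same spanning set the paper uses for surjectivity of $\rho$.
\item Suppose such a sum lies in $\hbar\mathcal{R}$. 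Reduce mod $\hbar$ and take the top total degree $N$ in $p_1,p_2$. It is $\sum_{|a|=N}\bar f_{a,w}\,w\,(\nu_1p_1)^{a_1}p_2^{a_2}$.
\item Separate the components along $W$, whose action preserves $p$-degree. Since $\nu_1p_1$ and $p_2$ are algebraically independent over the domain $\mathcal{O}(U_{-1}\cap X_{reg})$, all these $\bar f_{a,w}$ vanish.
\item Descending induction on $|a|$ then gives $f_{a,w}\in\hbar\,(\kk[\hbar]\otimes\mathcal{O}(U_{-1}))$ for all $(a,w)$. So the sum lies in $\hbar\,\mathcal{H}_c^\circ(U_{-1})$.
\end{itemize}

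\textbf{Grading convention.} Your convention does not match the paper's. If the $c$-term is read as $\hbar\cdot(cfs)$, it vanishes at $\hbar=0$. That contradicts $D_\xi^0=\mathbf{L}_\xi-c\sum_s\varepsilon(s)\frac{\langle\xi,\alpha_s\rangle}{\alpha_s}s$, and the generator $\nu_1p_1+c(s_{12}+s_{13}+s_{23})$ of $\mathcal{J}_0(U_{-1})$ used later. To make $D_\xi$ homogeneous of degree $1$ while keeping its $c$-term at $\hbar=0$, you must put $c$ in degree $1$.
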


\section{Conjecture}

\subsection{Setup}

\subsubsection{Logarithmic vector fields}

Let $D=X\backslash X_{reg}$ be the divisor. Write $\mathcal{T}_{X}(-\log D)$
for the $\mathcal{O}_{X}$-submodule of $\mathcal{D}_{X}$ of logarithmic vector fields on
$X$ with the divisor $D$. It is the subsheaf of the tangent sheaf
$\mathcal{T}_{X}$ consisting of vector fields that are tangent to
a generic point of $D$. Recall that we have $j:\mathcal{D}_{X}\iso j_{*}\mathcal{D}_{X_{reg}}$
and may therefore view $\mathcal{T}_{X}(-\log D)$ in $j_{*}\mathcal{D}_{X_{reg}}$,
which we do now.

Observe that we have embeddings of $\mathcal{O}_{X}$-modules:
\[
\mathcal{T}_{X}(-\log D)\hookrightarrow j_{*}\mathcal{D}_{X_{reg}}\rtimes W\hookrightarrow\mathcal{R}=\mathrm{Rees}\bigl(\kk\otimes(j_{*}\mathcal{D}_{X_{reg}}\rtimes W)\bigr).
\]

Here a vector field $v$ over a $W$-stable affine open $U$, $v\in\mathcal{T}_{X}(-\log D)(U)$,
is mapped to $\hbar v$ considered as a differential operator in the
degree $1$ component of $\mathcal{R}(U)$.
\begin{prop}
The image of $\mathcal{T}_{X}(-\log D)$ in $\mathcal{R}$ lies in
$\mathcal{H}_{c}^{\circ}(X,W)$.
\end{prop}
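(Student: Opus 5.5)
Fix a $W$-stable affine open $U\subseteq X$ and a logarithmic vector field $v\in\mathcal{T}_{X}(-\log D)(U)$. The plan is to show that $\hbar v$ differs from a good Dunkl operator by an element of $\kk\otimes\mathcal{O}_{X}(U)\cdot\CC W$. Since $\mathcal{O}_{X}$, $W$ and $\mathcal{J}$ all lie in $\mathcal{H}_{c}^{\circ}(X,W)$, this suffices. The natural candidate on $V_{reg}\cap U$ expands $v$ in the global frame $\partial_{\xi}$ for $\xi$ in a basis $\xi_{1},\dots,\xi_{r}$ of $V$. Write $v=\sum_{i}g_{i}\partial_{\xi_{i}}$, with $g_{i}\in\mathcal{O}(V_{reg}\cap U)$; note that $g_{i}=v(x_{i})$ for the dual linear coordinates $x_{i}\in V^{*}$, pulled back via $p$. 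The candidate is then
\[
D_{v}:=\sum_{i}g_{i}D_{\xi_{i}}=\hbar v-c\sum_{s\in S}\varepsilon(s)\frac{v(\alpha_{s})}{\alpha_{s}}\,s .
\]
By construction $D_{v}$ lies in $j_{*}\mathcal{J}_{\mid V_{reg}}(U)$, because each $g_{i}=v(p^{*}x_{i})$ is a regular function on all of $U$, so the left $\mathcal{O}$-span is respected.

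Step one is to show that each coefficient $v(\alpha_{s})/\alpha_{s}$ is a regular function on $U$ modulo $\mathcal{O}_{X}\langle H_{s}'\rangle$. More precisely, it is a section of $\mathcal{O}_{X}(U)$ after adding nothing: $v$ is tangent to every component of $D$, and $p^{*}\alpha_{s}$ vanishes exactly along $H_{s}'$ together with exceptional components of $D$. Locally write $p^{*}\alpha_{s}=u\prod_{E}z_{E}^{m_{E}}$, where $u$ is a unit and the $z_{E}$ are local equations of components of $D$. Logarithmicity gives $v(z_{E})\in z_{E}\mathcal{O}$. Hence
\[
\frac{v(\alpha_{s})}{\alpha_{s}}=\frac{v(u)}{u}+\sum_{E}m_{E}\frac{v(z_{E})}{z_{E}}\in\mathcal{O}_{X}(U).
\]
This is the standard fact that $d\log$ of a function with divisor supported on $D$ pairs regularly with $\mathcal{T}_{X}(-\log D)$.

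Step two, the main point, is to check that $D_{v}\in\mathcal{F}(U)$, so that $D_{v}\in\mathcal{J}(U)$. By definition $\mathcal{F}(U)$ contains $D_{v}^{E}=\hbar v+c\sum f_{H_{s}'}s$ for any choice of $f_{H_{s}'}$ in the coset $\chi_{H_{s}'}(v)$. Since $\mathcal{F}(U)$ contains $\mathcal{O}_{X}(U)$-multiples of these, changing the $f$'s by regular functions keeps us inside $\mathcal{O}_{X}\cdot W+\mathcal{F}$. I would compare with Etingof's definition of $\chi_{H}$: for a vector field $\xi$, $\chi_{H}(\xi)$ is the class of $\xi(z_{H})/z_{H}$ (up to the normalisation $-\varepsilon(s)$ fixed in the paper) modulo $\mathcal{O}_{X}$, where $z_{H}$ is a local equation of $H$. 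Because $v$ is tangent to $H_{s}'$, this class is zero. By Step one our coefficients are also regular. Hence $D_{v}-D_{v}^{E}\in\kk\otimes\mathcal{O}_{X}(U)\cdot\CC W$.

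The obstacle here is that $\mathcal{F}(U)$ may be stated as generated only by the specific elements $D_{\xi}$, rather than closed under adding $\mathcal{O}_{X}s$. I would handle this by working directly in $\mathcal{H}_{c}^{\circ}$. It suffices to show $\hbar v\in\mathcal{H}_{c}^{\circ}$, and $\hbar v=D_{v}+c\sum_{s}\varepsilon(s)\frac{v(\alpha_{s})}{\alpha_{s}}s$, where the second term lies in $\mathcal{O}_{X}W$. So we only need $D_{v}\in\mathcal{J}(U)$. For that I would take the Etingof generator $D_{v}^{E}$ with the particular representative $f_{H_{s}'}=-\varepsilon(s)v(\alpha_{s})/\alpha_{s}$. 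This representative is legitimate because its class in $\mathcal{O}\langle H_{s}'\rangle/\mathcal{O}$ is zero, which equals $\chi_{H_{s}'}(v)$ by tangency. With this choice $D_{v}^{E}=D_{v}$ on the nose, so $D_{v}\in\mathcal{F}(U)\cap j_{*}\mathcal{J}_{\mid V_{reg}}(U)=\mathcal{J}(U)$. Finally, compatibility with restriction to smaller $W$-stable affines gives the sheaf statement.
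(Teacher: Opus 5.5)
Your argument is correct, and it takes a genuinely different, more general route than the paper.

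The paper's proof only says the statement is ``clear in the case $W=S_3$''. It rests on the explicit chart lemmas listing generators of $\mathcal{J}(U_{-1})$, $\mathcal{J}(U_\zeta)$ and $\mathcal{T}_X(-\log D)$, from which one reads off $\hbar\nu_1\partial_{\nu_1}=D_{\nu_1}-c(s_{12}+s_{13}+s_{23})$ (with $D_{\nu_1}$ the good operator) and $\hbar\delta_{\nu_2}\partial_{\nu_2}=\delta_{\nu_2}D_{\nu_2}-(\text{regular})$, and likewise on $U_\zeta$. So the proposition, although stated for arbitrary $W$, is only established there for $S_3$.

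You instead work on an arbitrary $W$-stable affine $U$ for arbitrary $W$. You show that for logarithmic $v$ the single operator $D_v=\hbar v-c\sum_s\varepsilon(s)\frac{v(\alpha_s)}{\alpha_s}s$ lies in the $\mathcal{O}$-span of the standard Dunkl operators on $U\cap V_{reg}$ and is also an Etingof generator. The two inputs are that $\chi_{H'_s}(v)=0$ by tangency, and that $v(\alpha_s)/\alpha_s$ is regular because $\mathrm{div}(p^*\alpha_s)$ is supported on $D$. In the $S_3$ charts your $D_v$ for $v=\nu_1\partial_{\nu_1}$ and $v=\delta_{\nu_2}\partial_{\nu_2}$ are exactly the paper's $D_{\nu_1}$ and $\delta_{\nu_2}D_{\nu_2}$, so the paper's computation is a special case of yours.

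Your route also buys more. Since $D_v\in\mathcal{H}^\circ_c\cdot\mathcal{J}$, it gives $\pi\psi_0(v)=\rho\bigl(c\sum_s\varepsilon(s)\frac{v(\alpha_s)}{\alpha_s}s\bigr)$ for every $W$. Granted Conjecture~\ref{conj:rho}, the composite $\rho^{-1}\circ\pi\circ\psi_0$ is then explicitly $v\mapsto c\sum_s\varepsilon(s)\frac{v(\alpha_s)}{\alpha_s}s$, with values in $\mathcal{O}_X\otimes\mathfrak{t}^1_W$. The paper's approach, in exchange, produces the concrete local generators it needs for the later $S_3$ computations.

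Two small remarks. First, membership of $D_v$ in $j_*\mathcal{J}_{\mid V_{reg}}(U)$ needs only $g_i\in\mathcal{O}(U\cap V_{reg})$; the regularity that actually matters is your Step one. Second, Step one genuinely uses that $D=X\setminus V_{reg}$ contains the exceptional components, as in the introduction and \S5, rather than $D=X\setminus X_{reg}$ as written in \S3. For example, $\partial_{\nu_1}$ is tangent to every $H'_s$, but $\partial_{\nu_1}(z_{12})/z_{12}=1/\nu_1$. You implicitly chose the right reading.
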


\begin{proof}
This is clear in the case $W=S_{3}$, see below.
\end{proof}
Let $\psi:\mathcal{T}_{X}(-\log D)\to\mathcal{H}_{c}^{\circ}(X,W)$
denote the map determined by the proposition. Composing $\psi$ with
the projection to the quotient $\mathcal{R}/\hbar\mathcal{R}$ gives,
using Lemma \ref{lem:H^0-at-0}, an injection
\[
\mathcal{T}_{X}(-\log D)\overset{\psi_{0}}{\hookrightarrow}\mathcal{H}_{0,c}^{\circ}(X,W)\hookrightarrow\kk\otimes\bigl(j_{*}\pi_{*}\mathcal{O}_{T^{*}X_{reg}}\rtimes W\bigr).
\]

\subsubsection{Standard $\mathcal{H}_{c}^{\circ}$-module}

We define
\[
\mathcal{M}_{c}=\mathcal{H}_{c}^{\circ}(X,W)\Big/\mathcal{H}_{c}^{\circ}(X,W)\cdot\mathcal{J}
\]
and the specialization at $\hbar=t\in\CC$:
\[
\mathcal{M}_{t,c}=\mathcal{H}_{t,c}^{\circ}(X,W)\Big/\mathcal{H}_{t,c}^{\circ}(X,W)\cdot\mathcal{J}.
\]

\begin{rem*}
This module serves a role analogous to $\Delta^{reg}(\kk W)$ (but
for the model $X$ instead of $V_{reg}$) in the construction of Gaudin
Hamiltonians \cite[\S8.3.B]{bonnafeCherednikAlgebrasCalogeroMoser2022}. Let us review that role of $\Delta^{reg}(\kk W)$.
Recall the following composition of maps of $\kk[V_{reg}]$-modules:
\[
\kk[V_{reg}]\otimes\kk W\to H_{0,c}^{reg}\to H_{0,c}^{reg}\otimes_{\kk[V^{*}]\rtimes W}\kk W=\Delta^{reg}(\kk W).
\]
Here the first map is an inclusion, and the second is by $h\mapsto h\otimes1$.
The composition is an isomorphism $\varphi:\kk[V_{reg}]\otimes\kk W\iso\Delta^{reg}(\kk W)$.

The Gaudin operators arise from $\varphi$ in the following way. The
Dunkl operator embedding restricts to an isomorphism:
\[
H_{0,c}^{reg}\iso\kk[V_{reg}\times V^{*}]\rtimes W.
\]
We have $V\subset\kk[V_{reg}\times V^{*}]$ and the preimage of $V$
gives a subspace of $H_{0,c}^{reg}$. Taking the image of this in
$\Delta^{reg}(\kk W)$ and then the preimage under $\varphi$ yields
the Gaudin subalgebra.
\end{rem*}

\subsubsection{Parametrization of $\mathcal{M}_{0,c}$}

Let $i$ denote the embedding of $\mathcal{O}_{X}$-modules
\[
i:\mathcal{O}_{X}\otimes\kk W\to\mathcal{H}_{0,c}^{\circ}(X,W).
\]
Let $\rho=\pi\circ i$. Consider the following diagram:

\[
\begin{tikzcd}
  {\mathcal{T}_X(-\log D)} & {\mathcal{H}^\circ _{0,c}} & {\mathcal{O}_X\otimes \kk W} \\
  & {\mathcal{H}^\circ _{0,c} \Big/ \mathcal{H}^\circ _{0,c}\cdot \mathcal{J}}
  \arrow["{\psi_0}", from=1-1, to=1-2]
  \arrow["i"', from=1-3, to=1-2]
  \arrow["{\pi }"', from=1-2, to=2-2]
  \arrow["\rho", from=1-3, to=2-2]
\end{tikzcd}
\]

\begin{conjecture}
\label{conj:rho}The map $\rho$ is an isomorphism of $\mathcal{O}_{X}$-modules.
\end{conjecture}

\begin{proof}[Some ideas of a proof]
We could show $\rho$ is surjective using the fact (perhaps needs
Lemma) that an element of $\mathcal{H}_{0,c}^{\circ}(X,W)$ can be
written as a product of factors in $\mathcal{O}_{X}$, $W$, and $\mathcal{J}$
with factors in $\mathcal{J}$ on the right, which vanish in $\mathcal{M}_{0,c}$.
(The commutator of an element of $\mathcal{O}_{X}$ and an element
of $\mathcal{J}$ has no poles.)

An argument using the filtrations and the embedding
\[
\mathcal{H}_{0,c}^{\circ}(X,W)\hookrightarrow\kk\otimes\bigl(j_{*}\pi_{*}\mathcal{O}_{T^{*}X_{reg}}\rtimes W\bigr)
\]
should show injectivity.
\end{proof}

\subsubsection{Main conjecture}

The sheaf of Gaudin algebras is embedded as follows:
\[
\mathcal{G}au(W)\subset\mathcal{O}_{X}\otimes\mathfrak{t}_{W}^{1}\subset\mathcal{O}_{X}\otimes\kk W.
\]
Here $\mathfrak{t}_{W}^{1}$ is the complex vector space spanned
by the set of reflections $S\subset W$.
\begin{conjecture}
\label{conj:main}\leavevmode
\begin{itemize}
\item The map $\rho^{-1}\circ\pi\circ\psi_{0}$ is injective.
\item We have
\[
\mathrm{im}(\rho^{-1}\circ\pi\circ\psi)=\mathcal{G}au(W)\subset\mathcal{O}_{X}\otimes\mathfrak{t}_{W}^{1}.
\]
\end{itemize}
\end{conjecture}

\begin{proof}
One can see in the case $W=S_{3}$ that the composition $\rho^{-1}\circ\pi\circ\psi$
gives Gaudin Hamiltonians in $\mathcal{O}_{X}\otimes\kk W$.
\end{proof}

\section{The sheaf $\mathcal{G}au(S_{3})$ on $X$}

\subsection{$S_{3}$-invariant charts on $X$\label{subsec:Charts-on-the-blowup}}

In this section $W=S_{3}$. Set $\zeta=e^{2\pi i/3}$ throughout.
Let $V'=\oplus_{i=1}^{3}\CC e_{i}$ and $V'^{*}=\oplus_{i=1}^{3}\CC z_{i}$
with $z_{i}$ dual to $e_{i}$. Write $e_{ij}=e_{i}-e_{j}\in V'$
and $z_{ij}=z_{i}-z_{j}\in V'^{*}$. Let $V=\langle e_{12},e_{13}\rangle\subset V'$
and $V^{*}=V'^{*}/(z_{1}+z_{2}+z_{3})$. We also consider (the projection
of) $z_{ij}$ in $V^{*}$. The set $\{z_{12},z_{13}\}$ is a basis
of $V^{*}$. (It is not dual to $\{e_{12},e_{13}\}$.) We view $V=\mathrm{Spec}\;k[z_{12},z_{13}]$
as an affine scheme. We define a non-affine scheme $X$ by gluing
two open affine $S_{3}$-stable charts, $U_{-1}$ and $U_{\zeta}$.
They are given by:
\[
U_{-1}=\mathrm{Spec}\;k[\nu_{1},\nu_{2},(3\nu_{2}-1)^{-1}(3\nu_{2}-2)^{-1}]
\]
and
\[
U_{\zeta}=\mathrm{Spec}\;k[\mu_{1},\mu_{2},(1+\mu_{2}(\zeta^{2}-1))^{-1}].
\]
Define $\theta_{\mu}=1+\mu_{2}(\zeta-1)$ and $\theta_{\nu}=1+\nu_{2}(\zeta^{2}-1)$.

There is a distinguished open subvariety of $U_{-1}$ given by the
nonvanishing of $\theta_{\nu}(1+\nu_{2}(\zeta-1))$. Write $D_{-1}$
for that subvariety. Similarly there is a distinguished open subvariety
$D_{\zeta}$ of $U_{\zeta}$ given by the nonvanishing of $\theta_{\mu}((2\zeta+1)\mu_{2}-1)((\zeta+2)\mu_{2}-2)$.
There is an isomorphism of varieties $D_{-1}\iso D_{\zeta}$ determined
by the following Lemma:
\begin{lem}
\label{lem:gluing-alg-iso}There is an isomorphism of algebras
\begin{multline*}
k[\nu_{1},\nu_{2},\theta_{\nu}^{-1},(1+\nu_{2}(\zeta-1))^{-1},(3\nu_{2}-1)^{-1},(3\nu_{2}-2)^{-1}]\\
\iso k[\mu_{1},\mu_{2},\theta_{\mu}^{-1},(1+\mu_{2}(\zeta^{2}-1))^{-1},((2\zeta+1)\mu_{2}-1)^{-1},((\zeta+2)\mu_{2}-2)^{-1}]
\end{multline*}
given by
\[
\nu_{1}\mapsto\zeta^{2}\mu_{1}\theta_{\mu},\qquad\nu_{2}\mapsto\zeta\mu_{2}\theta_{\mu}^{-1},\qquad\theta_{\mu}\mapsto\theta_{\nu}^{-1}.
\]
The inverse is given by
\[
\mu_{1}\mapsto\zeta\nu_{1}\theta_{\nu},\qquad\mu_{2}\mapsto\zeta^{2}\nu_{2}\theta_{\nu}^{-1}.
\]
\end{lem}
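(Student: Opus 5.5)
The plan is to define the map on generators, check that it is a well-defined algebra homomorphism between the two localized rings, and then check that the stated inverse composes to the identity in both directions. Write $A_\nu$ for the left-hand algebra and $A_\mu$ for the right-hand one. Both are localizations of polynomial rings. Hence a homomorphism $\phi:A_\nu\to A_\mu$ is determined by the images of $\nu_1,\nu_2$, provided that each of the four inverted elements $\theta_\nu$, $1+\nu_2(\zeta-1)$, $3\nu_2-1$, $3\nu_2-2$ is sent to a unit of $A_\mu$.

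So I first set $\phi(\nu_1)=\zeta^2\mu_1\theta_\mu$ and $\phi(\nu_2)=\zeta\mu_2\theta_\mu^{-1}$, which makes sense because $\theta_\mu$ is inverted in $A_\mu$. Then I compute the image of each inverted element, multiplying through by $\theta_\mu$ and using $1+\zeta+\zeta^2=0$:
\[
\phi(\theta_\nu)=\theta_\mu^{-1}\bigl(1+\mu_2(\zeta-1)+\zeta\mu_2(\zeta^2-1)\bigr)=\theta_\mu^{-1}\bigl(1+\mu_2(\zeta-1+1-\zeta)\bigr)=\theta_\mu^{-1}.
\]
This is the claimed relation $\theta_\nu\mapsto\theta_\mu^{-1}$; the statement writes it in the other direction, but it is the same relation. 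Similarly:
\begin{itemize}
\item $\phi(1+\nu_2(\zeta-1))=\theta_\mu^{-1}\bigl(1+\mu_2(\zeta-1)+\mu_2(\zeta^2-\zeta)\bigr)=\theta_\mu^{-1}\bigl(1+\mu_2(\zeta^2-1)\bigr)$;
\item $\phi(3\nu_2-1)=\theta_\mu^{-1}\bigl(3\zeta\mu_2-1-\mu_2(\zeta-1)\bigr)=\theta_\mu^{-1}\bigl((2\zeta+1)\mu_2-1\bigr)$;
\item $\phi(3\nu_2-2)=\theta_\mu^{-1}\bigl(3\zeta\mu_2-2-2\mu_2(\zeta-1)\bigr)=\theta_\mu^{-1}\bigl((\zeta+2)\mu_2-2\bigr)$.
\end{itemize}
Each is a unit in $A_\mu$, since the factors on the right are exactly the inverted elements there. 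So $\phi$ is well defined.

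Symmetrically, I define $\psi:A_\mu\to A_\nu$ by $\mu_1\mapsto\zeta\nu_1\theta_\nu$ and $\mu_2\mapsto\zeta^2\nu_2\theta_\nu^{-1}$. The same computations with $\zeta$ and $\zeta^2$ exchanged give:
\begin{itemize}
\item $\psi(\theta_\mu)=\theta_\nu^{-1}$;
\item $\psi(1+\mu_2(\zeta^2-1))=\theta_\nu^{-1}(1+\nu_2(\zeta-1))$;
\item $\psi((2\zeta+1)\mu_2-1)=\theta_\nu^{-1}(3\nu_2-1)$, using $(2\zeta+1)\zeta^2=2+\zeta^2$;
\item $\psi((\zeta+2)\mu_2-2)=\theta_\nu^{-1}(3\nu_2-2)$.
\end{itemize}
So $\psi$ is also well defined.

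Finally I check the two composites on generators, which suffices because both are ring maps out of localizations. For $\psi\circ\phi$, we get $\nu_2\mapsto\zeta\cdot\zeta^2\nu_2\theta_\nu^{-1}\cdot\theta_\nu=\nu_2$ and $\nu_1\mapsto\zeta^2\cdot\zeta\nu_1\theta_\nu\cdot\theta_\nu^{-1}=\nu_1$. The composite $\phi\circ\psi$ is handled identically.

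The only real obstacle is bookkeeping: the factors produced by the four unit computations must match exactly the localizing elements on each side. These are short identities using $\zeta^3=1$ and $1+\zeta+\zeta^2=0$, and I would verify them one at a time. Geometrically, the result says $D_{-1}\cong D_\zeta$, which gives the gluing of the two charts.
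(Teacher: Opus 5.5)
Your proposal is correct and follows the same route as the paper, whose entire proof is ``simple change of variables.'' You have written that verification out in full: each of the four inverted elements on one side is sent to $\theta^{-1}$ times one of the inverted elements on the other side, and both composites fix the generators $\nu_1,\nu_2$ and $\mu_1,\mu_2$. Every identity you state holds, using $\zeta^3=1$ and $1+\zeta+\zeta^2=0$.
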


\begin{proof}
Simple change of variables.
\end{proof}
On $U_{-1}\cap U_{\zeta}$ we let $\theta=\theta_{\mu}$ as an element
of $\mathcal{O}(U_{-1})$ and $\theta=\theta_{\nu}^{-1}$ as an element
of $\mathcal{O}(U_{\zeta})$.
\begin{rem*}
The lines $\nu_{2}=\infty$, $(3\nu_{2}-1)=0$, and $(3\nu_{2}-2)=0$
arise from the $2$-cycles, and the lines $\mu_{2}=\infty$ and $1+\mu_{2}(\zeta^{2}-1)=0$
arise from the $3$-cycles in $S_{3}$. The coordinates $\theta_{\mu}$
and $\theta_{\nu}$ give charts on $\mathbb{P}(V)$ in the standard
form.
\end{rem*}
The group $S_{3}$ acts on $X$, and the charts $U_{-1}$ and $U_{\zeta}$
are $S_{3}$-invariant. The action is given on $U_{-1}$ by:
\begin{align*}
(\nu_{1},\nu_{2})\overset{s_{12}}{\to} & \left(\nu_{1}(1-3\nu_{2}),\nu_{2}(3\nu_{2}-1)^{-1}\right)\\
(\nu_{1},\nu_{2})\overset{s_{13}}{\to} & \left(\nu_{1}(3\nu_{2}-2),(2\nu_{2}-1)(3\nu_{2}-2)^{-1}\right)\\
(\nu_{1},\nu_{2})\overset{s_{23}}{\to} & \left(\nu_{1},1-\nu_{2}\right),
\end{align*}
and on $U_{\zeta}$ by:
\begin{align*}
(\mu_{1},\mu_{2})\overset{s_{12}}{\to} & \left(\mu_{1}(1+\mu_{2}(\zeta^{2}-1)),-\mu_{2}(1+\mu_{2}(\zeta^{2}-1))^{-1}\right)\\
(\mu_{1},\mu_{2})\overset{s_{13}}{\to} & \left(\zeta\mu_{1}(1+\mu_{2}(\zeta^{2}-1)),-(\mu_{2}+\zeta)(1+\mu_{2}(\zeta^{2}-1))^{-1}\right)\\
(\mu_{1},\mu_{2})\overset{s_{23}}{\to} & \left(\zeta^{2}\mu_{1}(1+\mu_{2}(\zeta^{2}-1)),(1-\mu_{2})(1+\mu_{2}(\zeta^{2}-1))^{-1}\right).
\end{align*}
Define $\delta=z_{12}z_{13}z_{23}$ and $V_{reg}=\mathrm{Spec}\;k[z_{12},z_{13},\delta^{-1}]\subset V$.
This is an $S_{3}$-equivariant subvariety. We have $V_{reg}\hookrightarrow X$
using the formulas
\begin{align*}
z_{12} & =\nu_{1}\nu_{2},z_{13}=\nu_{1}(1-\nu_{2}),\text{ so }\nu_{1}=z_{12}+z_{13}\\
z_{12} & =\mu_{1}\mu_{2},z_{13}=\zeta^{2}\mu_{1}(1-\mu_{2}),\text{ so }\mu_{1}=z_{12}+\zeta z_{13}.
\end{align*}

\subsection{Definition of $\mathcal{G}au(S_{3})$}

\subsubsection{Definition of $\mathcal{G}au(S_{3})$ on $V_{reg}$ \label{subsec:Gau-Vreg}}

We define a sheaf of commutative Lie algebras $\mathcal{G}au(S_{3})_{\mid V_{reg}}$
over $V_{reg}$. Let $\mathcal{N}'$ be the free $\mathcal{O}(V_{reg})$-module
with basis the set of simple reflections $\{s_{12},s_{13},s_{23}\}=S\subset S_{3}$.
By localization, $\mathcal{N}'$ determines a sheaf over the affine
$V_{reg}$. The chosen basis determines three global sections, and
gives an isomorphism $\mathcal{O}_{V_{reg}}^{3}\iso\mathcal{N}'$.
Define three new global sections of $\mathcal{N}'$:
\begin{align*}
H_{1} & =-c\left(\frac{1}{z_{12}}s_{12}+\frac{1}{z_{13}}s_{13}\right)\\
H_{2} & =-c\left(-\frac{1}{z_{12}}s_{12}+\frac{1}{z_{23}}s_{23}\right)\\
H_{3} & =-c\left(-\frac{1}{z_{13}}s_{13}-\frac{1}{z_{23}}s_{23}\right)=-(H_{1}+H_{2}).
\end{align*}
These sections generate an $\mathcal{O}(V_{reg})$-submodule of $\mathcal{N}'(V_{reg})$.
By localization, this $\mathcal{O}(V_{reg})$-module determines a
sheaf $\mathcal{G}au(S_{3})_{\mid V_{reg}}$ over $V_{reg}$.
\begin{rem*}
Aguirre, Felder, and Veselov \cite{aguirreGaudinSubalgebrasWonderful2016},
following Vinberg, define the \emph{principal
Gaudin subalgebras} to be the $k$-subspaces $\langle H_{i}(z)\rangle_{i}\subset ks_{12}\oplus ks_{13}\oplus ks_{23}$
parametrized by points $z\in V_{reg}$, or the limits of such spans
for sequences of points in $V_{reg}$ approaching the hyperplanes.
We will consider their definition later on.
\end{rem*}

\subsubsection{Definition of $\mathcal{G}au(S_{3})$ on $X$}

Let $\mathcal{N}$ be the free $\mathcal{O}_{X}$-module with basis
$S$. So $\mathcal{N}_{\mid V_{reg}}=\mathcal{N}'$. We will define
$\mathcal{G}au(S_{3})(U_{-1})\subset\mathcal{N}(U_{-1})$ and $\mathcal{G}au(S_{3})(U_{\zeta})\subset\mathcal{N}(U_{\zeta})$
and glue these on $U_{-1}\cap U_{\zeta}$.

Define three sections of $\mathcal{N}(U_{-1})$:
\begin{align*}
G_{1} & =-c\Bigl((1-\nu_{2})s_{12}+\nu_{2}s_{13}\Bigr)\\
G_{2} & =-c\Bigl(-(1-2\nu_{2})s_{12}+\nu_{2}s_{23}\Bigr)\\
G_{3} & =-c\Bigl(-(1-2\nu_{2})s_{13}-(1-\nu_{2})s_{23}\Bigr).
\end{align*}
Let $\mathcal{G}au(S_{3})(U_{-1})\subset\mathcal{N}(U_{-1})$ be the
$\mathcal{O}(U_{-1})$-submodule generated by the $G_{i}$. Now define
three sections of $\mathcal{N}(U_{\zeta})$:
\begin{align*}
G'_{1} & =-c\Bigl((1-\mu_{2})s_{12}+\zeta\mu_{2}s_{13}\Bigr)\\
G'_{2} & =-c\Bigl(-(\mu_{2}+\zeta)s_{12}+\zeta^{2}\mu_{2}s_{23}\Bigr)\\
G'_{3} & =-c\Bigl(-(\mu_{2}+\zeta)s_{13}-\zeta(1-\mu_{2})s_{23}\Bigr).
\end{align*}
Let $\mathcal{G}au(S_{3})(U_{\zeta})\subset\mathcal{N}(U_{\zeta})$
be the $\mathcal{O}(U_{\zeta})$-submodule generated by the $G'_{i}$.
On the intersection $U_{-1}\cap U_{\zeta}$, using the morphism of
Lemma \ref{lem:gluing-alg-iso}, we compute $G_{i}=\zeta\theta^{-1}G'_{i}$.
Since $\theta^{-1}\in\mathcal{O}(U_{-1}\cap U_{\zeta})$ we see that in $\mathcal{N}(U_{-1}\cap U_{\zeta})$
the submodule generated by the $G_{i\mid U_{-1}\cap U_{\zeta}}$ agrees
with that generated by the $G'_{i\mid U_{-1}\cap U_{\zeta}}$. Therefore
the sheaves on the two affine opens glue to a sheaf on $X$ that is
a sub-$\mathcal{O}_{X}$-module of $\mathcal{N}$.
\begin{prop}
The sheaf $\mathcal{G}au(S_{3})$ of $\mathcal{O}_{X}$-modules (commutative
Lie algebras) just defined restricts to $\mathcal{G}au(S_{3})_{\mid V_{reg}}$
defined in \S\ref{subsec:Gau-Vreg}.
\end{prop}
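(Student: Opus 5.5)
The plan is to compare the two submodules of $\mathcal{N}$ on $V_{reg}$ directly, working chart by chart. Since $V_{reg}\subset U_{-1}\cap U_{\zeta}$ and the two local descriptions of $\mathcal{G}au(S_{3})$ agree on $U_{-1}\cap U_{\zeta}$ (we already have $G_{i}=\zeta\theta^{-1}G'_{i}$ with $\theta$ invertible there), it suffices to work on $U_{-1}$. We must show that the $\mathcal{O}(V_{reg})$-submodule of $\mathcal{N}'(V_{reg})$ spanned by the restrictions $G_{i\mid V_{reg}}$ equals the one spanned by $H_{1},H_{2},H_{3}$. Sheafification by localization then gives the equality of sheaves on $V_{reg}$.

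First, rewrite the $H_{i}$ in the $\nu$-coordinates using $z_{12}=\nu_{1}\nu_{2}$, $z_{13}=\nu_{1}(1-\nu_{2})$, and hence $z_{23}=z_{13}-z_{12}=\nu_{1}(1-2\nu_{2})$. Put $P=\nu_{2}(1-\nu_{2})(1-2\nu_{2})$. Multiplying $H_{1}$ by $z_{12}z_{13}/\nu_{1}=\nu_{1}\nu_{2}(1-\nu_{2})$ gives
\[
\nu_{1}\nu_{2}(1-\nu_{2})H_{1}=-c\bigl(\nu_{1}(1-\nu_{2})s_{12}+\nu_{1}\nu_{2}s_{13}\bigr)=\nu_{1}G_{1}.
\]
Hence
\[
H_{1}=\frac{1}{\nu_{2}(1-\nu_{2})}G_{1}.
\]
The same computation with $z_{12}z_{23}$ gives
\[
H_{2}=\frac{1}{\nu_{1}\nu_{2}(1-2\nu_{2})}\,\nu_{1}G_{2}=\frac{1}{\nu_{2}(1-2\nu_{2})}G_{2},
\]
up to the sign conventions. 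These should be checked against the displayed $G_{2}$: the coefficient of $s_{12}$ is $-(1-2\nu_{2})$, which matches $-z_{23}/\nu_{1}$. The corresponding formula for $G_{3}$ in terms of $H_{3}$ follows from $z_{13}z_{23}$ in the same way.

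Next, observe that on $V_{reg}$ the function $\nu_{1}=z_{12}+z_{13}$ is nonzero, because $z_{12}+z_{13}=-2z_{23}+3z_{13}$, or more precisely because it is the coordinate of the point in the blow-up. We need the scalars $\nu_{2}(1-\nu_{2})$ and $\nu_{2}(1-2\nu_{2})$ to be units in $\mathcal{O}(V_{reg})$. These equal $z_{12}z_{13}/\nu_{1}^{2}$ and $z_{12}z_{23}/\nu_{1}^{2}$, which are units on $V_{reg}$ once $\nu_{1}$ is shown invertible there. Once this holds, each $G_{i}$ is a unit multiple of $H_{i}$ on $V_{reg}$, so the two submodules coincide. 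The sign in $G_{3}$ versus $H_{3}$ should also be checked, although $H_{3}=-(H_{1}+H_{2})$ already puts it in the span.

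The main obstacle I expect is the invertibility of $\nu_{1}$ on $V_{reg}$. In $V'^{*}$ modulo $z_{1}+z_{2}+z_{3}$, the form $z_{12}+z_{13}=2z_{1}-z_{2}-z_{3}=3z_{1}$ is not a product of root forms, so it is not automatically a unit on $V_{reg}$. If it fails to be a unit, the embedding $V_{reg}\hookrightarrow X$ lands in $U_{-1}$ only on the locus $\nu_{1}\neq0$. In that case I would instead cover $V_{reg}$ by the two opens $V_{reg}\cap U_{-1}$ and $V_{reg}\cap U_{\zeta}$, where $\mu_{1}=z_{12}+\zeta z_{13}\neq0$. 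The two linear forms $z_{12}+z_{13}$ and $z_{12}+\zeta z_{13}$ have no common zero away from the origin, and the origin is not in $V_{reg}$, so these opens cover $V_{reg}$. I would then run the same computation on $U_{\zeta}$ with $G'_{i}$, where $H_{i}$ equals $G'_{i}$ times $\mu_{1}^{-2}$ times a root-product unit. Equality of the submodules on each piece of the cover gives equality of the sheaves.
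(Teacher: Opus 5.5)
Once you commit to your fallback, your argument is the paper's proof. But the fallback is forced: the opening reduction ``$V_{reg}\subset U_{-1}\cap U_{\zeta}$, so it suffices to work on $U_{-1}$'' is false, not merely doubtful. On $V$ one has $\nu_{1}=z_{12}+z_{13}=3z_{1}$, which vanishes at the point $z_{12}=-1$, $z_{13}=1$, $z_{23}=2$ of $V_{reg}$. Moreover $U_{-1}$ also inverts $3\nu_{2}-1=-z_{2}/z_{1}$ and $3\nu_{2}-2=z_{3}/z_{1}$, so $U_{-1}\cap V_{reg}=V_{reg}\cap\{z_{1}z_{2}z_{3}\neq0\}$. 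You must therefore compare the two submodules on $U_{-1}\cap V_{reg}$ and on $U_{\zeta}\cap V_{reg}$ separately, as the paper does.

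These two sets cover $V_{reg}$ because $U_{-1}\cup U_{\zeta}=X$: $U_{-1}$ omits only the three lines $z_{i}=0$, and $U_{\zeta}$ omits only the two $3$-cycle eigenlines $\mu_{1}=0$ and $\zeta z_{12}+z_{13}=0$. Your justification via ``$\nu_{1}$ and $\mu_{1}$ have no common zero'' is not the right one. The loci $\{\nu_{1}\neq0\}$ and $\{\mu_{1}\neq0\}$ are strictly larger than the chart intersections, and they are not $S_{3}$-stable, so they are not open in the equivariant topology the paper uses.

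Your displayed computation also loses a factor of $\nu_{1}$. In fact $\frac{z_{12}z_{13}}{\nu_{1}}H_{1}=-c\bigl(\frac{z_{13}}{\nu_{1}}s_{12}+\frac{z_{12}}{\nu_{1}}s_{13}\bigr)=G_{1}$, not $\nu_{1}G_{1}$. The correct identities are $G_{1}=\nu_{1}\nu_{2}(1-\nu_{2})H_{1}$, $G_{2}=\nu_{1}\nu_{2}(1-2\nu_{2})H_{2}$ and $G_{3}=\nu_{1}(1-\nu_{2})(1-2\nu_{2})H_{3}$; there is no sign problem with $G_{3}$. The coefficients are $z_{12}z_{13}/\nu_{1}$, $z_{12}z_{23}/\nu_{1}$ and $z_{13}z_{23}/\nu_{1}$. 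On $U_{\zeta}$ the same computation gives $H_{i}$ as $G'_{i}$ times $\mu_{1}$ (not $\mu_{1}^{-2}$) times a unit built from root forms and a power of $\zeta$; for example $H_{1}=\zeta^{-1}\mu_{1}(z_{12}z_{13})^{-1}G'_{1}$. These slips are harmless, because $\nu_{1}$ and $\mu_{1}$ are units on $U_{-1}\cap V_{reg}$ and $U_{\zeta}\cap V_{reg}$ respectively. With these corrections, your chart-by-chart unit-multiple argument coincides with the paper's.
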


\begin{proof}
Check $\mathcal{G}au(S_{3})(U_{-1})$ and $\mathcal{G}au(S_{3})(U_{\zeta})$
separately. In $\mathcal{G}au(S_{3})(U_{-1}\cap V_{reg})$ one computes
that
\begin{align*}
G_{1} & =\nu_{1}\nu_{2}(1-\nu_{2})H_{1}\\
G_{2} & =\nu_{1}\nu_{2}(1-2\nu_{2})H_{2}\\
G_{3} & =\nu_{1}(1-\nu_{2})(1-2\nu_{2})H_{3}.
\end{align*}
The coefficients of $H_{i}$ are units in $\mathcal{O}(U_{-1}\cap V_{reg})$,
so indeed the span of the $G_{i}$ is the span of the $H_{i}$ over
$\mathcal{O}(U_{-1}\cap V_{reg})$. Similarly, compute in $\mathcal{G}au(S_{3})(U_{\zeta}\cap V_{reg})$
that
\begin{align*}
G'_{1} & =\mu_{1}\mu_{2}(1-\mu_{2})H_{1}\\
G'_{2} & =\mu_{1}\mu_{2}(\mu_{2}+\zeta)H_{2}\\
G'_{3} & =\mu_{1}(1-\mu_{2})(\mu_{2}+\zeta)H_{3},
\end{align*}
and again the coefficients are units in $\mathcal{O}(U_{\zeta}\cap V_{reg})$,
so the span of the $G'_{i}$ is the span of the $H_{i}$ over $\mathcal{O}(U_{\zeta}\cap V_{reg})$.
\end{proof}

\section{Sheaf $\mathcal{G}au(S_{3})$ from $\mathcal{H}_{0,c}^{\circ}$ on
$X$}

\subsection{Etingof-Cherednik $\mathcal{H}_{c}(X,S_{3})$}

We record generators for the space of Etingof-Dunkl operators. The
$\mathcal{O}(U_{-1})$-module $\mathcal{F}(U_{-1})$ is generated
by:
\begin{align*}
D_{\nu_{1}} & =\hbar\partial_{\nu_{1}}\\
D_{\nu_{2}} & =\hbar\partial_{\nu_{2}}+c\left(\frac{s_{12}}{\nu_{2}}+\frac{s_{13}}{\nu_{2}-1}+\frac{s_{23}}{\nu_{2}-\frac{1}{2}}\right),
\end{align*}
and the $\mathcal{O}(U_{\zeta})$-module $\mathcal{F}(U_{\zeta})$
is generated by:
\begin{align*}
D_{\mu_{1}} & =\hbar\partial_{\mu_{1}}\\
D_{\mu_{2}} & =\hbar\partial_{\mu_{2}}+c\left(\frac{s_{12}}{\mu_{2}}+\frac{s_{13}}{\mu_{2}-1}+\frac{s_{23}}{\mu_{2}+\zeta}\right).
\end{align*}

Recall that $\Gamma(U_{-1},\mathcal{H}_{c})$ is generated by $\mathcal{F}(U_{-1})$,
$\mathcal{O}(U_{-1})$, and $W$, while $\Gamma(U_{\zeta},\mathcal{H}_{c})$
is generated by $\mathcal{F}(U_{\zeta})$, $\mathcal{O}(U_{\zeta})$,
and $W$.
\begin{rem*}
We have the following transition formulas over $U_{-1}\cap U_{\zeta}$:
\begin{align*}
D_{\nu_{1}} & =\zeta\theta^{-1}D_{\mu_{1}}\\
D_{\nu_{2}} & =\zeta^{2}\theta^{2}D_{\mu_{2}}+\theta(\zeta^{2}-1)\mu_{1}D_{\mu_{1}}.
\end{align*}
This shows that $D_{\nu_{2}}$ and $\nu_{2}D_{\nu_{2}}$ are globally
defined, but $D_{\nu_{1}}$ is not.
\end{rem*}

\subsection{Gaudin from Cherednik over $X$}

Write $\mathcal{H}_{0,c}$ for the sheaf $\mathcal{H}_{0,c}(X,S_{3})$.

\subsubsection{Good Dunkl operators}

The tangent vector fields $D_{e_{i}}$ defined on $V'$ give vector
fields on $V\subset V'$ by restriction. These may be written in terms
of the basis $\{\partial_{z_{12}},\partial_{z_{13}}\}$ of $TV$ arising
from the coordinate chart $\{z_{12},z_{13}\}$:
\[
D_{e_{1}}=\partial_{z_{12}}+\partial_{z_{13}},\;D_{e_{2}}=-\partial_{z_{12}},\;D_{e_{3}}=-\partial_{z_{13}}.
\]

The standard Dunkl operators are the following elements of $R=\mathrm{Rees}\,\bigl(\kk\otimes\mathcal{D}(V_{reg})\rtimes S_{3}\bigr)$:
\begin{align*}
\xi_{1} & =\hbar D_{e_{1}}+c\left(\frac{1}{z_{12}}s_{12}+\frac{1}{z_{13}}s_{13}\right)\\
\xi_{2} & =\hbar D_{e_{2}}+c\left(\frac{-1}{z_{12}}s_{12}+\frac{1}{z_{23}}s_{23}\right)\\
\xi_{3} & =\hbar D_{e_{3}}+c\left(\frac{-1}{z_{13}}s_{13}+\frac{-1}{z_{23}}s_{23}\right).
\end{align*}

These operators generate $\mathcal{J}(V_{reg})$ as a left $\mathcal{O}(V_{reg})$-module.
Define $y_{i}=[\hbar D_{e_{i}}]$ in the quotient $R/\hbar R$. The
same formulas but with $y_{i}$ substituted for $\hbar D_{e_{i}}$
give elements of $\kk\otimes\bigl(\mathcal{O}(T^{*}V_{reg})\rtimes S_{3}\bigr)$
which, together with $\mathcal{O}_{V}$ and $S_{3}$, generate $H_{0,c}(V,S_{3})$.

Now we consider the sheaf $\mathcal{J}$ of Etingof-Dunkl operators
that extend standard Dunkl operators. In the next two propositions
we understand the $\xi_{i}$ to be written in the variables $\nu_{i}$
or $\mu_{i}$ using the isomorphism of Lemma \ref{lem:gluing-alg-iso}.
\begin{lem}
The Dunkl operators generating $\mathcal{J}(U_{-1})$ over $\mathcal{O}(U_{-1})$
are:
\begin{align*}
D_{\nu_{1}} & =\hbar\nu_{1}\partial_{\nu_{1}}+c(s_{12}+s_{13}+s_{23})\\
D_{\nu_{2}} & =\hbar\partial_{\nu_{2}}+c\left(\frac{s_{12}}{\nu_{2}}+\frac{s_{13}}{\nu_{2}-1}+\frac{s_{23}}{\nu_{2}-\frac{1}{2}}\right).
\end{align*}
The Dunkl operators generating $\mathcal{J}(U_{\zeta})$ over $\mathcal{O}(U_{\zeta})$
are:
\begin{align*}
D_{\mu_{1}} & =\hbar\mu_{1}\partial_{\mu_{1}}+c(s_{12}+s_{13}+s_{23})\\
D_{\mu_{2}} & =\hbar\partial_{\mu_{2}}+c\left(\frac{s_{12}}{\mu_{2}}+\frac{s_{13}}{\mu_{2}-1}+\frac{s_{23}}{\mu_{2}+\zeta}\right).
\end{align*}
\end{lem}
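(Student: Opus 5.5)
We must show that $\mathcal{J}(U_{-1})=\mathcal{F}(U_{-1})\cap j_{*}\mathcal{J}_{\mid V_{reg}}(U_{-1})$ is the $\mathcal{O}(U_{-1})$-module generated by the two listed operators, and similarly on $U_{\zeta}$. The two charts are symmetric, so I carry out the argument on $U_{-1}$ and indicate the changes for $U_{\zeta}$.

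\emph{Step 1: rewrite the standard Dunkl operators.} Using $z_{12}=\nu_{1}\nu_{2}$, $z_{13}=\nu_{1}(1-\nu_{2})$ and $z_{23}=z_{13}-z_{12}=\nu_{1}(1-2\nu_{2})$, the chain rule gives
\[
\partial_{z_{12}}+\partial_{z_{13}}=\partial_{\nu_{1}}+\tfrac{1-\nu_{2}}{\nu_{1}}\partial_{\nu_{2}}\quad\text{and}\quad \partial_{z_{12}}-\partial_{z_{13}}=\tfrac{1}{\nu_{1}}\partial_{\nu_{2}}\ \text{(up to signs)}.
\]
Substituting into $\xi_{1},\xi_{2},\xi_{3}$ gives explicit expressions in $\nu_{1},\nu_{2}$.

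\emph{Step 2: the listed operators lie in $\mathcal{J}(U_{-1})$.} I would verify the identities
\[
\nu_{1}\nu_{2}(1-\nu_{2})\,\xi_{1}\quad\text{and}\quad\nu_{1}\nu_{2}(1-2\nu_{2})\,\xi_{2}\ \ldots
\]
or, more cleanly, the following two identities on $U_{-1}\cap V_{reg}$:
\[
\textstyle\sum_{i} z_{i}\,\xi_{i}=\hbar\nu_{1}\partial_{\nu_{1}}+c(s_{12}+s_{13}+s_{23}),
\]
which is the Euler operator, and
\[
\nu_{1}\bigl(\text{suitable combination of }\xi_{i}\bigr)=D_{\nu_{2}}.
\]
Each of these is an $\mathcal{O}(V_{reg})$-combination of the $\xi_{i}$, so both operators lie in $j_{*}\mathcal{J}_{\mid V_{reg}}$. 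For the Euler identity, recall $\sum z_{i}\partial_{e_{i}}$ is the Euler field and $\sum_{i}z_{i}\langle e_{i},\alpha_{s}\rangle/\alpha_{s}=1$ for each $s$; modulo $z_{1}+z_{2}+z_{3}$ this is consistent because $\sum\xi_{i}=0$. Both operators also lie in $\mathcal{F}(U_{-1})$. For $D_{\nu_{2}}$ this is immediate, since it is one of the listed generators of $\mathcal{F}(U_{-1})$. For the Euler operator, it equals $\nu_{1}D_{\nu_{1}}^{\mathcal{F}}+c\sum s$, and $c\sum s$ has no poles, so it is an allowed choice of $f_{H'_{s}}$ modulo $\mathcal{O}_{X}$. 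The corresponding statements on $U_{\zeta}$ follow from the same computation using $\mu_{1}=z_{12}+\zeta z_{13}$.

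\emph{Step 3: the two operators generate (the main obstacle).} Let $P=aD_{\nu_{1}}^{\mathcal{F}}+bD_{\nu_{2}}+\sum_{w}g_{w}w$ be a general element of $\mathcal{F}(U_{-1})$ with $a,b,g_{w}\in\mathcal{O}(U_{-1})$. Suppose $P$ is also an $\mathcal{O}(V_{reg})$-combination $\sum h_{i}\xi_{i}$ with $\sum h_{i}\in$ the relation $\sum\xi_{i}=0$ accounted for.
\begin{itemize}
\item Compare symbols (the $\hbar$-parts): $a\partial_{\nu_{1}}+b\partial_{\nu_{2}}$ must equal $\sum h_{i}\partial_{e_{i}}$. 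This determines $(h_{i})$ from $(a,b)$, up to adding multiples of $(1,1,1)$.
\item Compare the $W$-parts: the coefficient of each $w\notin S\cup\{1\}$ must vanish, forcing $g_{w}=0$ for those $w$. The coefficient of $1$ must also vanish, and for each reflection $s$ the $s$-coefficients must match, which determines $g_{s}$.
\end{itemize}
The key point is that the $s$-coefficient of $\sum h_{i}\xi_{i}$ equals $c\,\xi(\alpha_{s})/\alpha_{s}$, where $\xi=\sum h_{i}e_{i}$ is the vector field. The $s$-coefficient of $aD_{\nu_{1}}^{\mathcal{F}}+bD_{\nu_{2}}$ differs from this by a regular function. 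Working this out gives the constraint that $a\in\nu_{1}\mathcal{O}(U_{-1})$: along the exceptional divisor $\nu_{1}=0$, the vector field must be tangent, i.e. logarithmic. The reason is that the exceptional divisor is not among the $H'_{s}$, so the $\mathcal{F}$-operators have no pole term along it, whereas $\xi(\alpha_{s})/\alpha_{s}$ contributes $a/\nu_{1}$. This forces $c\,a/\nu_{1}$ to be regular. Once $a=\nu_{1}a'$ holds, the $s$-coefficients agree with those of $a'$ times the Euler operator plus $b$ times $D_{\nu_{2}}$, up to the functions $g_{s}$, and matching then forces the $g_{s}$ to be exactly as required. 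Hence $P=a'\cdot(\text{Euler})+bD_{\nu_{2}}$.

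I expect the pole bookkeeping at $\nu_{1}=0$ and along the lines $3\nu_{2}=1,2$ (which are inverted on $U_{-1}$) to be the delicate step. I would handle it by expanding each $1/\alpha_{s}$ in the $\nu$-coordinates. The same argument, with $\mu$ in place of $\nu$, gives the result on $U_{\zeta}$.
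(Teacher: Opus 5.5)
Your proposal is correct and is essentially the paper's ``simple calculation'' written out. Containment follows from $\sum_i z_i\xi_i=\hbar\nu_1\partial_{\nu_1}+c(s_{12}+s_{13}+s_{23})$ and $D_{\nu_2}=\nu_1(\xi_3-\xi_2)$ (your unspecified ``suitable combination''). Generation follows because matching $s$-coefficients forces $c\,a/\nu_1$ to be regular (the exceptional divisor is not a reflection hypersurface), which is exactly the paper's remark about the $\nu_1$ coefficient of $\partial_{\nu_1}$.

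There is one harmless slip: your first chain-rule identity is really the formula for $\partial_{z_{12}}$ alone (the sum is $2\partial_{\nu_1}+\frac{1-2\nu_2}{\nu_1}\partial_{\nu_2}$), but nothing downstream uses it.
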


\begin{proof}
Simple calculation. Notice the $\nu_{1}$ coefficient of $\partial_{\nu_{1}}$
and the $\mu_{1}$ coefficient of $\partial_{\mu_{1}}$.
\end{proof}
\begin{lem}
We have
\begin{align*}
\mathcal{J}(U_{-1}) & =\mathcal{O}(U_{-1})\cdot(\nu_{1}\xi_{1},\nu_{1}\xi_{2},\nu_{1}\xi_{3})\subset\mathcal{F}(U_{-1})\subset\mathcal{H}_{c}^{\circ}(U_{-1})
\end{align*}
and
\begin{align*}
\mathcal{J}(U_{\zeta}) & =\mathcal{O}(U_{\zeta})\cdot(\mu_{1}\xi_{1},\mu_{1}\xi_{2},\mu_{1}\xi_{3})\subset\mathcal{F}(U_{\zeta})\subset\mathcal{H}_{c}^{\circ}(U_{\zeta}).
\end{align*}
\end{lem}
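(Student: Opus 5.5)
To prove the lemma we express the standard Dunkl operators $\xi_i$ in the chart coordinates, and then compare the $\mathcal{O}(U_{-1})$-span of $\nu_1\xi_1,\nu_1\xi_2,\nu_1\xi_3$ with the span of the two generators $D_{\nu_1},D_{\nu_2}$ from the previous lemma. The chart $U_\zeta$ is handled in the same way.

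\textbf{Step 1: the vector fields.} On $U_{-1}$ we have $z_{12}=\nu_1\nu_2$ and $z_{13}=\nu_1(1-\nu_2)$, so $\nu_1=z_{12}+z_{13}$ and $\nu_2=z_{12}/\nu_1$. The chain rule then gives
\[
\partial_{z_{12}}=\partial_{\nu_1}+\tfrac{1-\nu_2}{\nu_1}\partial_{\nu_2},\qquad \partial_{z_{13}}=\partial_{\nu_1}-\tfrac{\nu_2}{\nu_1}\partial_{\nu_2}.
\]
Hence
\[
\nu_1D_{e_1}=2\nu_1\partial_{\nu_1}+(1-2\nu_2)\partial_{\nu_2},\quad \nu_1D_{e_2}=-\nu_1\partial_{\nu_1}-(1-\nu_2)\partial_{\nu_2},\quad \nu_1D_{e_3}=-\nu_1\partial_{\nu_1}+\nu_2\partial_{\nu_2}.
\]

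\textbf{Step 2: the reflection terms.} Using $\nu_1/z_{12}=1/\nu_2$ and $\nu_1/z_{13}=1/(1-\nu_2)$, together with the analogous expression for $z_{23}$, the reflection parts of the $\nu_1\xi_i$ have coefficients in $\mathcal{O}(U_{-1})[\nu_2^{-1},(1-\nu_2)^{-1},(\nu_2-\tfrac12)^{-1}]$. We then check directly the two identities
\[
\nu_1\xi_1=2D_{\nu_1}+(1-2\nu_2)D_{\nu_2}+(\text{$\mathcal{O}$-combination}),\qquad \nu_2(1-\nu_2)(\dots)D_{\nu_2}=\text{an $\mathcal{O}(U_{-1})$-combination of the }\nu_1\xi_i.
\]
More precisely, we write the $3\times2$ matrix of $\nu_1\xi_1,\nu_1\xi_2,\nu_1\xi_3$ with respect to $\hbar\nu_1\partial_{\nu_1}$ and $\hbar\partial_{\nu_2}$. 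Its rows are $(2,1-2\nu_2)$, $(-1,-(1-\nu_2))$ and $(-1,\nu_2)$, and they sum to zero.

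The $2\times2$ minors are $-(1-2\nu_2)+2(1-\nu_2)=1$ and similar unimodular expressions. So the symbols $\hbar\nu_1\partial_{\nu_1}$ and $\hbar\partial_{\nu_2}$ lie in the span of the symbols of the $\nu_1\xi_i$. This shows the inclusion $\mathcal{O}\langle D_{\nu_1},D_{\nu_2}\rangle\subseteq\mathcal{O}\langle\nu_1\xi_i\rangle$, and the reverse inclusion follows from Step 1. It remains to check that the reflection parts also match; this is a finite partial-fraction identity which we verify term by term, for instance that $\nu_1\xi_2+\nu_1\xi_3$ reproduces the $c$-part of $-D_{\nu_1}+(2\nu_2-1)D_{\nu_2}$.

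\textbf{Step 3: equality with $\mathcal{J}(U_{-1})$.} By the previous lemma, $\mathcal{J}(U_{-1})$ is generated by $D_{\nu_1}$ and $D_{\nu_2}$, and Step 2 shows that these generate the same $\mathcal{O}(U_{-1})$-module as the $\nu_1\xi_i$. This gives the first equality. The containments $\mathcal{J}(U_{-1})\subset\mathcal{F}(U_{-1})\subset\mathcal{H}_c^\circ(U_{-1})$ hold by the definitions of $\mathcal{J}$ and of $\mathcal{H}^\circ_c$.

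\textbf{Step 4: the chart $U_\zeta$.} On $U_\zeta$ we repeat the computation with $z_{12}=\mu_1\mu_2$ and $z_{13}=\zeta^2\mu_1(1-\mu_2)$. Alternatively, we can transport the result through Lemma~\ref{lem:gluing-alg-iso}: the factor $\nu_1=\zeta^2\theta\mu_1$ differs from $\mu_1$ by the unit $\zeta^2\theta$ on the overlap. However, the conclusion on all of $U_\zeta$ needs the direct computation, because the overlap only covers part of $U_\zeta$.

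\textbf{Main obstacle.} The delicate point is the matching of the reflection coefficients in Step 2, and in particular the requirement that the combining coefficients be regular on all of $U_{-1}$. This includes the locus $\nu_1=0$ and the loci where $3\nu_2-1$ and $3\nu_2-2$ vanish. To handle it I would first verify that the coefficient matrix is unimodular over $\mathcal{O}(U_{-1})$, and only then check the $c$-parts.
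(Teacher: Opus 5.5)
Your route is the paper's ``simple calculation'': rewrite the $\nu_1\xi_i$ in chart coordinates and match them with the generators $D_{\nu_1},D_{\nu_2}$ of $\mathcal{J}(U_{-1})$ from the preceding lemma. The check you defer does go through, with no correction term:
\[
\nu_1\xi_1=2D_{\nu_1}+(1-2\nu_2)D_{\nu_2},\qquad \nu_1\xi_2=-D_{\nu_1}-(1-\nu_2)D_{\nu_2},\qquad \nu_1\xi_3=-D_{\nu_1}+\nu_2D_{\nu_2}.
\]
Hence $D_{\nu_2}=\nu_1\xi_3-\nu_1\xi_2$ and $D_{\nu_1}=-\nu_2\,\nu_1\xi_2-(1-\nu_2)\,\nu_1\xi_3$. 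Your sample identity for $\nu_1\xi_2+\nu_1\xi_3$ should therefore read $-2D_{\nu_1}+(2\nu_2-1)D_{\nu_2}$. On $U_\zeta$ one gets $\mu_1\xi_2=-D_{\mu_1}-(1-\mu_2)D_{\mu_2}$ and $\mu_1\xi_3=-\zeta D_{\mu_1}+\zeta\mu_2D_{\mu_2}$, with minor $-\zeta$.

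Exactness is essential, not cosmetic. The vector-field parts $\nu_1\partial_{\nu_1},\partial_{\nu_2}$ are $\mathcal{O}(U_{-1})$-independent, so $\mathcal{O}(U_{-1})D_{\nu_1}+\mathcal{O}(U_{-1})D_{\nu_2}$ contains no nonzero element of $c\,\mathcal{O}(U_{-1})\otimes\CC S$. A leftover $+(\mathcal{O}\text{-combination})$ of that kind would therefore break the equality. Likewise, an identity only for $\nu_2(1-\nu_2)(\cdots)D_{\nu_2}$ would not put $D_{\nu_2}$ itself in the span. Replace those loose claims by the formulas above. All coefficients are polynomials in $\nu_2$ (resp.\ $\mu_2$), so the regularity concern in your last paragraph is vacuous; in any case $3\nu_2-1$ and $3\nu_2-2$ are units on $U_{-1}$.

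One assertion in Step~3 is wrong, although it is copied from the statement. The containment $\mathcal{F}(U_{-1})\subset\mathcal{H}^\circ_c(U_{-1})$ does not follow from the definitions, since $\mathcal{H}^\circ_c$ is generated by $\mathcal{O}_X$, $W$ and $\mathcal{J}$, not by $\mathcal{F}$; in fact it is false. Indeed $\hbar\partial_{\nu_1}\in\mathcal{F}(U_{-1})$. On the other hand, an element of $\mathcal{H}^\circ_c(U_{-1})$ of $\hbar$-degree one is a sum of terms $aDb$ with $a,b\in\mathcal{O}(U_{-1})\rtimes S_3$ and $D\in\mathcal{J}(U_{-1})$, plus terms of order zero. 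The vector-field parts of $D_{\nu_1},D_{\nu_2}$ are $\nu_1\partial_{\nu_1},\partial_{\nu_2}$, and $S_3$ preserves the exceptional divisor $\nu_1=0$. So the principal symbol of such an element has, as coefficient of each $w\in S_3$, a vector field tangent to $\nu_1=0$, which $\partial_{\nu_1}$ is not. What holds by definition is $\mathcal{J}\subset\mathcal{F}$ and $\mathcal{J}\subset\mathcal{H}^\circ_c$ (and $\mathcal{F}\subset\mathcal{H}_c$). The chain in the statement should be read that way.
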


\begin{proof}
Simple calculation.
\end{proof}
We will use the notation $p_{i}=[\hbar\partial_{\nu_{i}}]$ and $r_{i}=[\hbar\partial_{\mu_{i}}]$
for the `canonical momentum operators'
in the Rees quotient.

\subsubsection{Representation}

Recall the definitions
\[
\pi:\mathcal{H}_{0,c}^{\circ}\to\mathcal{M}_{0}=\mathcal{H}_{0,c}^{\circ}\big/\mathcal{H}_{0,c}^{\circ}\cdot\mathcal{J},
\]
and the inclusion
\[
i:\mathcal{O}_{X}\otimes\kk S_{3}\hookrightarrow\mathcal{H}_{0,c}^{\circ}(X,S_{3}).
\]
(Note that $i$ is injective because the degree $0$ part of $\mathcal{R}/\hbar\mathcal{R}$
is just $\mathcal{O}_{X}\rtimes S_{3}$.)
\begin{prop}
The morphism $\pi\circ i:\mathcal{O}_{X}\otimes\kk S_{3}\to\mathcal{M}_{0}$
is an isomorphism of $\mathcal{O}_{X}$-modules.
\end{prop}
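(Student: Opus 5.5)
Since both sides are quasi-coherent $\mathcal{O}_X$-modules and $X=U_{-1}\cup U_{\zeta}$ with both charts $S_3$-stable and affine, it suffices to prove that $\pi\circ i$ is an isomorphism on sections over $U_{-1}$ and over $U_{\zeta}$, compatibly with restriction to $U_{-1}\cap U_{\zeta}$. The two charts are formally identical: the generators $D_{\nu_1},D_{\nu_2}$ and $D_{\mu_1},D_{\mu_2}$ of $\mathcal{J}$ have the same shape, with $-\tfrac12$ replaced by $\zeta$. So I will argue on $U_{-1}$ only. Write $A=\mathcal{O}(U_{-1})$ and $H=\mathcal{H}^\circ_{0,c}(U_{-1})$. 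At $\hbar=0$ the generators become $d_1=\nu_1p_1+cT$, with $T=s_{12}+s_{13}+s_{23}$ central in $\kk S_3$, and $d_2=p_2+c\,Q$, where $Q=\frac{s_{12}}{\nu_2}+\frac{s_{13}}{\nu_2-1}+\frac{s_{23}}{\nu_2-1/2}$.

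\emph{Surjectivity.} $H$ is generated as an algebra by $A$, $S_3$, $d_1$ and $d_2$. I will show that every monomial can be reordered into a sum of terms $a\,w\,d_1^{k}d_2^{l}$ with $a\in A$, $w\in S_3$. For this I need the straightening relations, computed in $\kk\otimes(j_*\pi_*\mathcal{O}_{T^*X_{reg}}\rtimes W)$ at $\hbar=0$.
\begin{itemize}
\item $[d_i,f]$ for $f\in A$ lies in $A\otimes\kk S_3$. For example $[d_2,f]=c\sum_s \frac{f-{}^sf}{\alpha_s}\,s$ with $\alpha_s$ the corresponding linear factor. This is regular precisely because ${}^sf-f$ vanishes on the fixed line of $s$, which is the parenthetical remark in the sketch after Conjecture~\ref{conj:rho}.
\item $w\,d_i\,w^{-1}$ is an $A\otimes\kk S_3$-combination of $d_1$ and $d_2$. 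This holds because $\mathcal{J}(U_{-1})=A\cdot(\nu_1\xi_1,\nu_1\xi_2,\nu_1\xi_3)$ by the preceding lemma, and $S_3$ permutes the $\xi_i$ while $\nu_1$ transforms by a unit.
\item $[d_1,d_2]$ lies in $H\cdot\mathcal{J}$. The standard Dunkl operators commute, and the $d_i$ are $A$-combinations of them, so this commutator is a combination of the $d_i$ with coefficients in $A\otimes\kk S_3$.
\end{itemize}
Granting these, a monomial with $k+l>0$ dies in $\mathcal{M}_0$, since $\mathcal{J}$ sits on the right. Hence $\pi\circ i$ is onto.

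\emph{Injectivity.} I must show $(A\otimes\kk S_3)\cap H\cdot\mathcal{J}=0$. The plan is to use Lemma~\ref{lem:H^0-at-0} to work in $\kk\otimes(\mathcal{O}(T^*X_{reg})\rtimes S_3)$, graded by degree in the momenta $p_1,p_2$. By the straightening above, every element of $H\cdot\mathcal{J}$ is a sum $h_1d_1+h_2d_2$, and after reordering each $h_j$ can be taken of the form $\sum a\,w\,d_1^{k}d_2^{l}$. Suppose $h_1d_1+h_2d_2=g\in A\otimes\kk S_3$, which has momentum degree $0$. I then compare top momentum-degree components. The symbols $\nu_1p_1$ and $p_2$ are algebraically independent over $\mathcal{O}(X_{reg})\rtimes S_3$, and the $d_i$ are these symbols plus lower-order terms. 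An induction on top degree should therefore force the top parts of the $h_j$ to cancel, as in a Koszul/PBW argument for the regular sequence $(d_1,d_2)$. A cleaner alternative is to restrict to $V_{reg}$. There $H$ becomes $H^{reg}_{0,c}$, where the Bonnaf\'e--Rouquier isomorphism $\varphi:\kk[V_{reg}]\otimes\kk W\iso\Delta^{reg}(\kk W)$ holds. Because $\mathcal{O}_X\otimes\kk S_3$ is torsion-free and $V_{reg}\subset U_{-1}$ is dense, any element of the kernel restricts to zero on $V_{reg}$ and hence is zero. The only point to check is that $H\cdot\mathcal{J}$ restricts into $H^{reg}_{0,c}\cdot\mathcal{J}_{|V_{reg}}$. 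This is immediate, because on $V_{reg}$ the element $\nu_1$ is a unit and so $\mathcal{J}$ restricts to the standard Dunkl operators.

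Compatibility on $U_{-1}\cap U_{\zeta}$ is automatic: $\pi\circ i$ is defined globally as a morphism of sheaves, so local isomorphisms glue. I expect the main obstacle to be the surjectivity straightening, specifically controlling $[d_1,d_2]$ and the conjugates $w\,d_i\,w^{-1}$ explicitly in the $\nu$-coordinates. The regularity of the coefficients at $\nu_1=0$ and at the poles $\nu_2=0,1,\tfrac12$ must be verified by direct computation using the $S_3$-action formulas on $U_{-1}$. The injectivity step, via reduction to $V_{reg}$, should be comparatively formal.
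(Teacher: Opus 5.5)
Your proof is correct. Surjectivity follows the paper's route: push the good Dunkl operators to the right, using that their commutators with $\mathcal{O}(U_{-1})$ have no poles. Your explicit use of the $S_3$-stability of $\mathcal{J}$ fills a step the paper leaves implicit.

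Your formulation $[d_1,d_2]\in H\cdot\mathcal{J}$ is also the accurate one. The commutator is not zero, despite the paper's parenthetical remark. Since $\nu_1p_1$ is the $S_3$-invariant symbol of the Euler field, one gets $[d_1,d_2]=c[T,d_2]=c\sum_{s}s\,(d_2-sd_2s)\in\kk S_3\cdot\mathcal{J}_0$. This settles what you called the main obstacle.

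For injectivity you take a genuinely different route. The paper stays on $X_{reg}$. It maps $\mathcal{M}_0$ to the quotient of $\kk\otimes(j_*\pi_*\mathcal{O}_{T^*X_{reg}}\rtimes S_3)$ by the left ideal generated by $\mathcal{J}_0$, and asserts that this quotient is $\kk\otimes(j_*\mathcal{O}_{X_{reg}}\rtimes S_3)$. You instead pass to the locus where $\nu_1$ is a unit. There $\mathcal{J}_0$ and the standard Dunkl operators generate the same left ideal, so the Bonnaf\'e--Rouquier isomorphism $\varphi$ and torsion-freeness finish the argument.

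Your route buys two things. First, it uses nothing about $S_3$ beyond the fact that sections of $\mathcal{J}$ restrict on $V_{reg}$ to $\mathcal{O}$-combinations of standard Dunkl operators, which holds by definition. So it gives the injectivity half of Conjecture~\ref{conj:rho} for every $W$. Second, it avoids an overstatement in the paper's Claim. On $X_{reg}\cap U_{-1}$ the coordinate $\nu_1$ is not inverted, so $d_1=\nu_1p_1+cT$ cannot eliminate $p_1$. In fact $p_1$ is not in the image modulo $(\mathcal{J}_0)$: set $c=0$ and take the identity component in $S_3$ of the momentum-degree-one part, which forces $f\nu_1=1$ with $f$ regular. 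Only the injective half of that Claim is true, and proving it essentially requires your passage to $V_{reg}$. The paper's version aims instead at a self-contained computation in the symbol algebra without invoking $\varphi$.

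One small correction: $V_{reg}\not\subset U_{-1}$. The chart $U_{-1}$ omits the proper transforms of the lines $z_1z_2z_3=0$, that is $\nu_2=\infty,\tfrac13,\tfrac23$. So argue on $V_{reg}\cap U_{-1}$ and localize $\varphi$ at the invariant $z_1z_2z_3$. With that, your Koszul/PBW sketch can be dropped.
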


\begin{proof}
First we check surjectivity. Consider $U_{-1}$. Every element of
$\mathcal{M}_{0}(U_{-1})$ has a representative in $\mathcal{H}_{c}^{\circ}(U_{-1})\subset\mathcal{R}(U_{-1})$
that is a sum of terms of the form
\[
\hbar^{r}f(\nu_{1},\nu_{2})sD_{\nu_{1}}^{\varepsilon_{1}}D_{\nu_{2}}^{\varepsilon_{2}}
\]
where $s\in S_{3}$ and $f\in\mathcal{O}(U_{-1})$ and $r,\varepsilon_{i}\in\mathbb{Z}^{\geq0}$.
(The commutators $[D_{\nu_{i}},D_{\nu_{j}}]$ and $[D_{\nu_{i}},\nu_{j}]$
have no poles. In fact the first type are zero in this case, in view
of the lemma.) The class of this representative is certainly zero
in $\mathcal{M}_{0}$ unless $r=\varepsilon_{1}=\varepsilon_{2}=0$.
All terms $f(\nu_{1},\nu_{2})s$ are contained in the image of $\pi\circ i$.

For injectivity, we view $\mathcal{J}_{0}$ inside $\kk\otimes\bigl(j_{*}\pi_{*}\mathcal{O}_{T^{*}X_{reg}}\rtimes S_{3}\bigr)$
using
\[
\mathcal{H}_{0,c}^{\circ}(X,W)\hookrightarrow\kk\otimes\bigl(j_{*}\pi_{*}\mathcal{O}_{T^{*}X_{reg}}\rtimes S_{3}\bigr)
\]
(again taking the quotient by $(\hbar)$) and consider the composite
morphism:
\[
\mathcal{O}_{X}\otimes\kk S_{3}\to\mathcal{M}_{0}\to\kk\otimes\bigl(j_{*}\pi_{*}\mathcal{O}_{T^{*}X_{reg}}\rtimes S_{3}\bigr)\Big/(\mathcal{J}_{0}).
\]

\begin{claim*}
The canonical map gives an isomorphism of $\mathcal{O}_{X}$-modules:
\[
\kk\otimes\bigl(j_{*}\mathcal{O}_{X_{reg}}\rtimes S_{3}\bigr)\iso\kk\otimes\bigl(j_{*}\pi_{*}\mathcal{O}_{T^{*}X_{reg}}\rtimes S_{3}\bigr)\Big/(\mathcal{J}_{0}).
\]
\end{claim*}
\begin{proof}
Using the standard coordinates on $T^{*}X$ we have on $U_{-1}$:
\begin{multline*}
\kk\otimes\bigl(j_{*}\pi_{*}\mathcal{O}_{T^{*}X_{reg}}\bigr)(U_{-1})\cong\kk[p_{1},p_{2}]\otimes\mathcal{O}(X_{reg}\cap U_{-1})\\
\cong\kk[p_{1},p_{2}]\otimes k[\nu_{1},\nu_{2},(3\nu_{2}-1)^{-1}(3\nu_{2}-2)^{-1},\nu_{2}^{-1},(\nu_{2}-1)^{-1},(2\nu_{2}-1)^{-1}].
\end{multline*}
It is clear that the two generators of $\mathcal{J}_{0}(U_{-1})$
are independent, and the quotient is $\kk\otimes\mathcal{O}(X_{reg}\cap U_{-1})\otimes S_{3}$
as a $\kk$-module. The result follows from this and similar reasoning
over $U_{\zeta}$.
\end{proof}
The composite morphism in question induces (via the inverse of the
isomorphism in the claim) the obvious map of modules $\mathcal{O}_{X}\otimes\kk S_{3}\to\kk\otimes\bigl(j_{*}\mathcal{O}_{X_{reg}}\rtimes S_{3}\bigr)$,
and that one is injective.
\end{proof}

\subsubsection{Logarithmic vector fields on $X$}

Let $D=X\backslash V_{reg}$ be the (normal crossing) divisor in the
closure of $V_{reg}$. Write $\delta_{\nu_{2}}=\nu_{2}(\nu_{2}-1)(\nu_{2}-\frac{1}{2})$
and $\delta_{\mu_{2}}=\mu_{2}(\mu_{2}-1)(\mu_{2}+\zeta)$. Note that
$D\cap U_{-1}$ is given by $\nu_{1}\delta_{\nu_{2}}$ and $D\cap U_{\zeta}$
is given by $\mu_{1}\delta_{\mu_{2}}$.

The logarithmic tangent vector fields on $U_{-1}$ and $U_{\zeta}$
are generated as follows:
\begin{align*}
\mathcal{T}_{X}(-\log D)(U_{-1}) & =\left\langle \nu_{1}\partial\nu_{1},\delta_{\nu_{2}}\partial\nu_{2}\right\rangle \\
\mathcal{T}_{X}(-\log D)(U_{\zeta}) & =\left\langle \mu_{1}\partial\mu_{1},\delta_{\mu_{2}}\partial\mu_{2}\right\rangle .
\end{align*}
When localized away from $\nu_{2}=1$ and $\nu_{2}=\frac{1}{2}$,
for example, the first span is equal to $\langle\nu_{1}\partial\nu_{1},\nu_{2}\partial\nu_{2}\rangle$,
as one expects.

\subsubsection{Gaudin from Cherednik}
\begin{prop}
The morphism $\rho^{-1}\circ\pi\circ\psi_{0}$ of $\mathcal{O}_{X}$-modules
is injective.
\end{prop}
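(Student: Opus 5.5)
The strategy is to compute the composite $\rho^{-1}\circ\pi\circ\psi_{0}$ explicitly on each of the two $S_{3}$-stable charts $U_{-1}$ and $U_{\zeta}$, using the generators of $\mathcal{T}_{X}(-\log D)$ recorded above. Injectivity of a morphism of sheaves can be checked on this affine cover. The target $\mathcal{O}_{X}\otimes\kk S_{3}$ is a free $\mathcal{O}_{X}$-module. On $U_{-1}$ the source is the $\mathcal{O}(U_{-1})$-module generated by $\nu_{1}\partial_{\nu_{1}}$ and $\delta_{\nu_{2}}\partial_{\nu_{2}}$. These two sections are linearly independent over the function field, so this module is torsion-free of rank $2$; in fact it is free with this basis, since $\nu_{1}$ and $\delta_{\nu_{2}}$ are coprime. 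It therefore suffices to show that the images of the two generators are linearly independent over $\mathrm{Frac}\,\mathcal{O}(U_{-1})$. The same applies on $U_{\zeta}$.

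The computation on $U_{-1}$ runs as follows. By the Lemma listing generators of $\mathcal{J}(U_{-1})$, we have
\[
\hbar\nu_{1}\partial_{\nu_{1}}=D_{\nu_{1}}-c(s_{12}+s_{13}+s_{23}),\qquad \hbar\delta_{\nu_{2}}\partial_{\nu_{2}}=\delta_{\nu_{2}}D_{\nu_{2}}-c\,\delta_{\nu_{2}}\Bigl(\frac{s_{12}}{\nu_{2}}+\frac{s_{13}}{\nu_{2}-1}+\frac{s_{23}}{\nu_{2}-\frac12}\Bigr).
\]
Here $\delta_{\nu_{2}}$ clears all the poles, which is exactly the Proposition that $\psi$ lands in $\mathcal{H}_{c}^{\circ}$. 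Pass to $\hbar=0$ and project to $\mathcal{M}_{0}$. The terms $D_{\nu_{1}}$ and $\delta_{\nu_{2}}D_{\nu_{2}}$ are left multiples of elements of $\mathcal{J}$, so they die. Hence
\begin{align*}
\rho^{-1}\pi\psi_{0}(\nu_{1}\partial_{\nu_{1}})&=-c(s_{12}+s_{13}+s_{23}),\\
\rho^{-1}\pi\psi_{0}(\delta_{\nu_{2}}\partial_{\nu_{2}})&=-c\Bigl((\nu_{2}-1)(\nu_{2}-\tfrac12)s_{12}+\nu_{2}(\nu_{2}-\tfrac12)s_{13}+\nu_{2}(\nu_{2}-1)s_{23}\Bigr).
\end{align*}
The identification uses the Proposition that $\pi\circ i$ is an isomorphism.

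Independence comes from a $2\times2$ minor of the coefficient matrix in the basis $s_{12},s_{13},s_{23}$. The $(s_{12},s_{13})$ minor is $c^{2}\bigl(\nu_{2}(\nu_{2}-\tfrac12)-(\nu_{2}-1)(\nu_{2}-\tfrac12)\bigr)=c^{2}(\nu_{2}-\tfrac12)$. This is nonzero in the domain $\kk\otimes\mathcal{O}(U_{-1})$, so the map is injective on $U_{-1}$. On $U_{\zeta}$ the identical computation with $\mu_{i}$, $\delta_{\mu_{2}}$, and the poles $0,1,-\zeta$ gives a minor equal to $c^{2}$ times a nonzero polynomial in $\mu_{2}$ (namely $\mu_{2}+\zeta$ up to sign). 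Injectivity on a cover implies global injectivity.

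The main obstacle is justifying the step "these terms vanish in $\mathcal{M}_{0}$, and the remaining scalar part is what $\rho^{-1}$ returns". It requires that the degree-$0$ part $-c\sum(\cdots)s$ is literally $i$ of an element of $\mathcal{O}_{X}\otimes\kk S_{3}$, which holds because the coefficients are regular on the chart. It also requires that $\delta_{\nu_{2}}D_{\nu_{2}}$ lies in $\mathcal{H}^{\circ}\cdot\mathcal{J}$, which is clear since $\delta_{\nu_{2}}\in\mathcal{O}$. A secondary point is the torsion-freeness of the source, which makes the rank argument suffice. There is also the caveat that we work over $\kk=\CC[c]$: the minor is divisible by $c^{2}$, but it is still a nonzerodivisor, so injectivity holds over $\kk$. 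At $c=0$ the map is zero, and the statement is understood over $\kk$.
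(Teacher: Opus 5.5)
Your proposal is correct and follows the paper's route. On each chart you write $\hbar\nu_{1}\partial_{\nu_{1}}$ and $\hbar\delta_{\nu_{2}}\partial_{\nu_{2}}$ as good Dunkl operators minus their regular $\kk\otimes\mathcal{O}\rtimes S_{3}$ parts, discard the Dunkl part in $\mathcal{M}_{0}$, and check that the two resulting elements of $\mathcal{O}\otimes\kk S_{3}$ are independent, where you usefully make explicit the independence the paper only asserts via the minors $c^{2}(\nu_{2}-\tfrac12)$ and $c^{2}(\mu_{2}+\zeta)$.
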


\begin{proof}
Observe that $\nu_{1}p_{1}$ and $\delta_{\nu_{2}}p_{2}$ are independent
elements of $(\mathcal{R}/\hbar\mathcal{R})(U_{-1})$ that belong
to $\mathcal{H}_{0,c}^{\circ}(U_{-1})$. To see that $\nu_{1}p_{1}\in\mathcal{H}_{0,c}^{\circ}(U_{-1})$,
subtract $c(s_{12}+s_{13}+s_{23})$ from $D_{\nu_{1}}$. To see that
$\delta_{\nu_{2}}p_{2}\in\mathcal{H}_{0,c}^{\circ}(U_{-1})$, subtract
the appropriate expression (in $\kk\otimes\mathcal{O}(U_{-1})\rtimes S_{3}$)
from $\delta_{\nu_{2}}D_{\nu_{2}}$. These elements are also independent
when considered in $\mathcal{O}(U_{-1})\otimes\kk S_{3}$ through $\rho^{-1}\circ\pi$.
(It is equivalent to check that the generators of the Gaudin algebra
are independent.)

Apply a similar argument for $U_{\zeta}$.
\end{proof}
\begin{lem}
The image of $\rho^{-1}\circ\pi\circ\psi_{0}$ lies in $\mathcal{O}_{X}\otimes\mathfrak{t}_{S_{3}}^{1}\subset\mathcal{O}_{X}\otimes\kk S_{3}$.
\end{lem}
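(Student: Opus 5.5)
The plan is to compute $\rho^{-1}\circ\pi\circ\psi_{0}$ explicitly on the $\mathcal{O}$-module generators of $\mathcal{T}_{X}(-\log D)$ over each chart. Since $\mathcal{O}_{X}\otimes\mathfrak{t}_{S_{3}}^{1}$ is an $\mathcal{O}_{X}$-submodule and the map is $\mathcal{O}_{X}$-linear, it suffices to check the generators $\nu_{1}\partial_{\nu_{1}}$ and $\delta_{\nu_{2}}\partial_{\nu_{2}}$ on $U_{-1}$, and $\mu_{1}\partial_{\mu_{1}}$ and $\delta_{\mu_{2}}\partial_{\mu_{2}}$ on $U_{\zeta}$. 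The key mechanism is the one used in the proof of the preceding proposition. Each $\psi_{0}(v)$ equals a good Dunkl operator in $\mathcal{J}_{0}$ minus an explicit element of $\kk\otimes\mathcal{O}(U)\rtimes S_{3}$. The Dunkl operator part dies in $\mathcal{M}_{0}$, and the remaining element is $i$ of something. By the proposition that $\rho=\pi\circ i$ is an isomorphism, $\rho^{-1}\pi\psi_{0}(v)$ is exactly minus that correction term.

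On $U_{-1}$, the lemma on generators of $\mathcal{J}(U_{-1})$ gives
\[
\nu_{1}p_{1}=D_{\nu_{1}}-c(s_{12}+s_{13}+s_{23}) \quad\text{at }\hbar=0 .
\]
Hence $\rho^{-1}\pi\psi_{0}(\nu_{1}\partial_{\nu_{1}})=-c(s_{12}+s_{13}+s_{23})$, which lies in $\mathcal{O}\otimes\mathfrak{t}^{1}_{S_{3}}$. For the second generator we have
\[
\delta_{\nu_{2}}p_{2}=\delta_{\nu_{2}}D_{\nu_{2}}-c\Bigl((\nu_{2}-1)(\nu_{2}-\tfrac12)s_{12}+\nu_{2}(\nu_{2}-\tfrac12)s_{13}+\nu_{2}(\nu_{2}-1)s_{23}\Bigr).
\]
Here $\delta_{\nu_{2}}D_{\nu_{2}}\in\mathcal{J}(U_{-1})$ because $\delta_{\nu_{2}}\in\mathcal{O}(U_{-1})$. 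The correction has polynomial coefficients, so it lies in $\mathcal{O}(U_{-1})\otimes\mathfrak{t}^{1}$. Thus $\rho^{-1}\pi\psi_{0}(\delta_{\nu_{2}}\partial_{\nu_{2}})$ is a combination of reflections only. The computation on $U_{\zeta}$ is identical, with $\mu$ replacing $\nu$ and $\mu_{2}+\zeta$ replacing $\nu_{2}-\tfrac12$.

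The one point needing care is the justification that $\pi(\delta_{\nu_{2}}D_{\nu_{2}})=0$ in $\mathcal{M}_{0}$. One must check that the class of a left $\mathcal{O}$-multiple of an element of $\mathcal{J}$ vanishes. This is immediate since $\mathcal{O}\subset\mathcal{H}^{\circ}_{0,c}$ and $\mathcal{M}_{0}$ is the quotient by the left ideal $\mathcal{H}^{\circ}_{0,c}\cdot\mathcal{J}$. A second point is that the identification is compatible with gluing. The subsheaf $\mathcal{O}_{X}\otimes\mathfrak{t}^{1}$ is a subsheaf, so membership is local, and checking on the cover $\{U_{-1},U_{\zeta}\}$ suffices. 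I expect no real obstacle beyond confirming the partial-fraction bookkeeping: the residues of $D_{\nu_{2}}$ are attached to single reflections, so multiplying by $\delta_{\nu_{2}}$ clears denominators without producing any products of group elements. This is precisely why the image avoids the identity and the $3$-cycles.
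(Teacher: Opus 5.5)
Your proposal is correct and follows the paper's argument, just written out in full. On each chart you evaluate the map on the logarithmic generators $\nu_1\partial_{\nu_1},\ \delta_{\nu_2}\partial_{\nu_2}$ (resp.\ $\mu_1\partial_{\mu_1},\ \delta_{\mu_2}\partial_{\mu_2}$) by writing each as a good Dunkl generator minus an explicit reflection-valued element of $\mathcal{O}\otimes\mathfrak{t}^1_{S_3}$; this is exactly the ``trivial calculation'' the paper reads off from the formulas for $\mathcal{J}_0$ and $\mathcal{T}_X(-\log D)$, and your identities for $\nu_1 p_1$ and $\delta_{\nu_2}p_2$ are correct.
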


\begin{proof}
A trivial calculation that can be read off the formulas for (Dunkl)
generators of $\mathcal{J}_{0}(U_{-1})$ and $\mathcal{J}_{0}(U_{\zeta})$
and the basis of $\mathcal{T}_{X}(-\log D)(U_{-1})$ and $\mathcal{T}_{X}(-\log D)(U_{\zeta})$
given above.
\end{proof}
\begin{thm}
The image of $\rho^{-1}\circ\pi\circ\psi_{0}$ is $\mathcal{G}au(S_{3})$.
\end{thm}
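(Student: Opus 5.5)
The claim is local on $X$, and both sheaves are subsheaves of $\mathcal{O}_{X}\otimes\kk S_{3}$: $\mathcal{G}au(S_{3})$ sits inside $\mathcal{N}=\mathcal{O}_{X}\otimes\mathfrak{t}_{S_{3}}^{1}$, and so does the image, by the preceding lemma. It is therefore enough to compare the two $\mathcal{O}$-submodules over each of the charts $U_{-1}$ and $U_{\zeta}$. On each chart we compute the images of the two generators of $\mathcal{T}_{X}(-\log D)$ and show they generate the same submodule as $G_{1},G_{2},G_{3}$ (respectively $G'_{1},G'_{2},G'_{3}$). Agreement on the overlap $U_{-1}\cap U_{\zeta}$ then comes for free, because both sides are subsheaves of the same sheaf.

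On $U_{-1}$ the generators are $\nu_{1}\partial_{\nu_{1}}$ and $\delta_{\nu_{2}}\partial_{\nu_{2}}$. Under $\psi_{0}$ they go to $\nu_{1}p_{1}$ and $\delta_{\nu_{2}}p_{2}$. We rewrite these modulo $\mathcal{J}_{0}$ using the lemma giving the generators $D_{\nu_{1}},D_{\nu_{2}}$ of $\mathcal{J}(U_{-1})$:
\[
\nu_{1}p_{1}=D^{0}_{\nu_{1}}-c(s_{12}+s_{13}+s_{23}),\qquad
\delta_{\nu_{2}}p_{2}=\delta_{\nu_{2}}D^{0}_{\nu_{2}}-c\,\delta_{\nu_{2}}\Bigl(\tfrac{s_{12}}{\nu_{2}}+\tfrac{s_{13}}{\nu_{2}-1}+\tfrac{s_{23}}{\nu_{2}-\frac12}\Bigr).
\]
Here the Dunkl terms appear on the right and so vanish in $\mathcal{M}_{0}$. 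Applying $\rho^{-1}$, the images are
\[
A=-c(s_{12}+s_{13}+s_{23}),\qquad
B=-c\bigl((\nu_{2}-1)(\nu_{2}-\tfrac12)s_{12}+\nu_{2}(\nu_{2}-\tfrac12)s_{13}+\nu_{2}(\nu_{2}-1)s_{23}\bigr).
\]
We must then show $\mathcal{O}(U_{-1})\langle A,B\rangle=\mathcal{O}(U_{-1})\langle G_{1},G_{2},G_{3}\rangle$. The inclusion $\supseteq$ asks us to write each $G_{i}$ as a polynomial combination $aA+bB$. A quick check shows $G_{1}+G_{2}+G_{3}=-c\bigl(\nu_{2}s_{12}+(3\nu_{2}-1)s_{13}+(2\nu_{2}-1)s_{23}\bigr)$, which is the kind of relation we need. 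The inclusion $\subseteq$ asks for $A$ and $B$ as combinations of the $G_{i}$. Both inclusions are linear algebra over $k[\nu_{1},\nu_{2}]$ localized at $(3\nu_{2}-1)(3\nu_{2}-2)$.

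The cleanest way to organize this is to note that the rank-$3$ matrix of the $G_{i}$ has rank $2$ generically, since $G_{1},G_{2},G_{3}$ satisfy a linear relation. We then compare $2\times2$ minors: if $A,B$ lie in the span of the $G_{i}$ and the minors of $(A,B)$ generate the unit ideal on the chart (or the same ideal as the minors of the $G_{i}$), the two spans coincide. We first verify the relation expressing $\mathcal{O}\langle G_{i}\rangle$ as a rank-$2$ module, and then show it is saturated. This is where the units $(3\nu_{2}-1)^{-1}$ and $(3\nu_{2}-2)^{-1}$ should be needed: they are exactly the loci where the determinants of the coefficient matrices can degenerate.

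We repeat the same calculation on $U_{\zeta}$ with $\mu_{1}p_{1}\mapsto A$ (the same $A$), $\delta_{\mu_{2}}r_{2}\mapsto B'$, and the $G'_{i}$. The main obstacle is this module-equality step on each chart, i.e.\ checking that the spans agree integrally and not just after localizing to $V_{reg}$. Agreement on $V_{reg}$ is immediate from the earlier proposition comparing $G_{i}$ with $H_{i}$. Along $\nu_{1}=0$ nothing changes, since no coefficient involves $\nu_{1}$. Along $\delta_{\nu_{2}}=0$ we must check directly, by evaluating the coefficient matrices at $\nu_{2}=0,1,\tfrac12$, that both spans are rank-$2$ subbundles (fibrewise rank $2$) of $\mathcal{N}$. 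Two saturated rank-$2$ subbundles that agree generically are equal, and this is the argument I would use.
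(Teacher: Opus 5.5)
Your proposal is correct. Its reduction is the paper's: on each chart you push $\nu_1\partial_{\nu_1}$ and $\delta_{\nu_2}\partial_{\nu_2}$ (resp.\ $\mu_1\partial_{\mu_1}$ and $\delta_{\mu_2}\partial_{\mu_2}$) through the generators of $\mathcal{J}$ to get $A$ and $B$, then compare $\mathcal{O}\langle A,B\rangle$ with $\mathcal{O}\langle G_1,G_2,G_3\rangle$. Your signs are the right ones; the paper's $A_{-1},B_{-1}$ are their negatives, which does not affect spans.

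The two routes differ only in the last step.
\begin{itemize}
\item The paper writes down identities. In your normalization they read $G_1=2\bigl(B-\nu_2(\nu_2-1)A\bigr)$, $G_2=-2\bigl(B-\nu_2(\nu_2-\tfrac12)A\bigr)$, $G_3=2\bigl(B-(\nu_2-1)(\nu_2-\tfrac12)A\bigr)$. The reverse inclusion, which the paper leaves implicit, follows from $A=2G_1+G_2-G_3$ and $B=\tfrac12G_1+\nu_2(\nu_2-1)A$. The same works on $U_\zeta$ up to nonzero scalar factors.
\item You argue structurally. Both spans are rank-$2$ subbundles, checked fibrewise at $\nu_2\in\{0,1,\tfrac12\}$; I confirm both coefficient matrices have rank $2$ there. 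They agree over $V_{reg}$, hence they coincide.
\end{itemize}
Your argument gives both inclusions at once without finding coefficients, and it is the form more likely to carry over to $S_n$. The explicit identities are shorter in this case and already hold over $k[\nu_1,\nu_2]$.

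Some corrections to the write-up.
\begin{itemize}
\item The units $(3\nu_2-1)^{-1}$ and $(3\nu_2-2)^{-1}$ play no role, contrary to your expectation. The $2\times2$ minors of $(A,B)$ are $c^2(\nu_2-\tfrac12)$, $\tfrac12c^2(\nu_2-1)$ and $-\tfrac12c^2\nu_2$. After removing $c^2$ they generate the unit ideal of $k[\nu_2]$.
\item That same factor needs care. Since $\kk=\CC[c]$, every generator carries the scalar $c$. The saturation and fibre-rank statements should be made for $A/c$, $B/c$, $G_i/c$ inside $\mathcal{N}$; in $\kk\otimes\mathcal{N}$ all minors lie in $(c^2)$.
\item The sum $G_1+G_2+G_3$ you computed is not a relation and plays no role. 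The actual linear relation is $(2\nu_2-1)G_1+(\nu_2-1)G_2-\nu_2G_3=0$.
\item The earlier proposition only identifies $\mathcal{G}au(S_3)$ with $\langle H_i\rangle$ on $V_{reg}$. For the image side you also need three facts. On $U_{-1}\cap V_{reg}$ both $\nu_1$ and $\delta_{\nu_2}$ are units, so the logarithmic vector fields are all vector fields, spanned by the $D_{e_i}$. The $\xi_i=\nu_1^{-1}(\nu_1\xi_i)$ lie in $\mathcal{J}$ there. Finally, $\hbar D_{e_i}\equiv -c(\cdots)=H_i$ modulo $\xi_i$.
\end{itemize}
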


\begin{proof}
The elements
\begin{align*}
A_{-1} & =c(s_{12}+s_{13}+s_{23})\\
B_{-1} & =\delta_{\nu_{2}}c\left(\frac{s_{12}}{\nu_{2}}+\frac{s_{13}}{\nu_{2}-1}+\frac{s_{23}}{\nu_{2}-\frac{1}{2}}\right)
\end{align*}
generate $\mathcal{G}au(S_{3})(U_{-1})$. Namely,
\begin{align*}
G_{1} & =+2\bigl(B_{-1}-\nu_{2}(\nu_{2}-1)A_{-1}\bigr)\\
G_{2} & =-2\bigl(B_{-1}-\nu_{2}(\nu_{2}-1/2)A_{-1}\bigr)\\
G_{3} & =+2\bigl(B_{-1}-(\nu_{2}-1)(\nu_{2}-1/2)A_{-1}\bigr).
\end{align*}
These elements clearly generate the image of $\rho^{-1}\circ\pi\circ\psi_{0}$
over $\mathcal{O}(U_{-1})$.

The elements
\begin{align*}
A_{\zeta} & =c(s_{12}+s_{13}+s_{23})\\
B_{\zeta} & =\delta_{\mu_{2}}c\left(\frac{s_{12}}{\mu_{2}}+\frac{s_{13}}{\mu_{2}-1}+\frac{s_{23}}{\mu_{2}+\zeta}\right)
\end{align*}
generate $\mathcal{G}au(S_{3})(U_{\zeta})$. Namely
\begin{align*}
G_{1}' & =B_{\zeta}-\mu_{2}(\mu_{2}-1)A_{\zeta}\\
G_{2}' & =-\bigl(B_{\zeta}-\mu_{2}(\mu_{2}+\zeta)A_{\zeta}\bigr)\\
G_{3}' & =B_{\zeta}-(\mu_{2}-1)(\mu_{2}+\zeta)A_{\zeta}.
\end{align*}
These elements clearly generate the image of $\rho^{-1}\circ\pi\circ\psi_{0}$
over $\mathcal{O}(U_{\zeta})$.
\end{proof}

\section*{Acknowledgements}

I thank Rapha\"el Rouquier for sharing his ideas, and for advice along
the way with many aspects of this project. This work was supported
by an NDSEG Research Fellowship from the US Department of Defense
and by the NSF through grant DMS-1702305. (An earlier version of this article was written in 2022 but not shared widely. This version has been minimally revised.) AI Transparency: AI was
used for references and to draft the introduction and abstract.

\end{document}